\def\eqref#1{equation~(\ref{#1})}
\def\1{\bf{1}}
\newcommand{\Norm}[1]{\left\| #1 \right\|}
\def\inner#1#2{\left\langle #1, #2 \right\rangle}
\def\vone{{\bf{1}}}
\def\vf{{\bf{f}}}
\def\vg{{\bf{g}}}
\def\vu{{\bf{u}}}
\def\vv{{\bf{v}}}
\def\vx{{\bf{x}}}
\def\vy{{\bf{y}}}
\def\fD{{\mathcal{D}}}
\def\fO{{\mathcal{O}}}
\def\BE{{\mathbb{E}}}
\def\BN{{\mathbb{N}}}
\def\BR{{\mathbb{R}}}
\def\mG {{\bf G}}
\def\mI {{\bf I}}
\def\mU {{\bf U}}
\def\mV {{\bf V}}
\def\mW {{\bf W}}
\def\mX {{\bf X}}
\def\mY {{\bf Y}}
\theoremstyle{plain}
\newtheorem{thm}{Theorem}
\newtheorem{lem}{Lemma}
\newtheorem{asm}{Assumption}
\newtheorem{remark}{Remark}
\newtheorem{cor}{Corollary}
\newtheorem{prop}{Proposition}
\def\Ddots{\mathinner{\mkern1mu\raise\p@
\vbox{\kern7\p@\hbox{.}}\mkern2mu
\raise4\p@\hbox{.}\mkern2mu\raise7\p@\hbox{.}\mkern1mu}}
\newcommand*{\rom}[1]{\expandafter\@slowromancap\romannumeral #1@}
\def\bx{{\bar\vx}}
\def\bg{{\bar\vg}}
\def\bu{{\bar\vu}}
\def\bv{{\bar\vv}}
\def\vxi{{\bm\xi}}
\def\AG{{\rm AccGossip}}
\begin{document}

\title{Decentralized Stochastic Nonconvex Optimization under~the~$(L_0,L_1)$-Smoothness}


\author{Luo Luo}
\affiliation{%
  \institution{School of Data Science, Fudan University}
  \city{Shanghai}
  \country{China}}
\email{luoluo@fudan.edu.cn}

\author{Xue Cui}
\affiliation{%
  \institution{School of Data Science, Fudan University}
  \city{Shanghai}
  \country{China}}
\email{xcui24@m.fudan.edu.cn}

\author{Tingkai Jia}
\affiliation{%
  \institution{East China Normal University}
  \city{Shanghai}
  \country{China}}
\email{51275902086@stu.ecnu.edu.cn}

\author{Cheng Chen}
\authornote{The corresponding author}
\affiliation{%
  \institution{East China Normal University}
  \city{Shanghai}
  \country{China}}
\email{chchen@sei.ecnu.edu.cn}

\renewcommand{\shortauthors}{Luo, Cui, Jia, and Chen}

\begin{abstract}
This paper focuses on the decentralized stochastic optimization problem  $f(\mathbf{x})=\frac{1}{m}\sum_{i=1}^m f_i(\mathbf{x})$ over a connected network of $n$ agents, 
where each local function has the form of $f_i(\mathbf{x}) = {\mathbb E}\left[F(\mathbf{x};{\boldsymbol \xi}_i)\right]$ which satisfies the $(L_0,L_1)$-smooth condition but possibly nonconvex and each random variable ${\boldsymbol \xi}_i$ follows distribution ${\mathcal D}_i$.
We propose a novel algorithm called decentralized normalized stochastic gradient descent (DNSGD), which can achieve an $\epsilon$-stationary point at each local agent.
We present a new framework for analyzing decentralized first-order methods in the $(L_0,L_1)$-smooth setting, based on the Lyapunov function related to the product of the gradient norm and the consensus error.
We show that the proposed algorithm attains the upper bounds on the sample complexity of ${\mathcal O}(m^{-1}(L_f\sigma^2\Delta_f\epsilon^{-4} + \sigma^2\epsilon^{-2} + L_f^{-2}L_1^3\sigma^2\Delta_f\epsilon^{-1} + L_f^{-2}L_1^2\sigma^2))$ per agent
and the communication complexity of $\tilde{\mathcal O}((L_f\epsilon^{-2} + L_1\epsilon^{-1})\gamma^{-1/2}\Delta_f)$, 
where $L_f=L_0 +L_1\zeta$, $\sigma^2$ is the variance of the stochastic gradient, $\Delta_f$ is the initial optimal function value gap, $\gamma$ is the spectral gap of the network, and $\zeta$ is the degree of the gradient dissimilarity. 
In the special case of $L_1=0$, the above results (nearly) match the lower bounds of decentralized stochastic nonconvex optimization under the standard smoothness.
We also conduct numerical experiments to show the empirical superiority of our method.

\end{abstract}

\begin{CCSXML}
<ccs2012>
   <concept>
       <concept_id>10002950.10003714.10003716.10011138.10011140</concept_id>
       <concept_desc>Mathematics of computing~Nonconvex optimization</concept_desc>
       <concept_significance>500</concept_significance>
       </concept>
 </ccs2012>
\end{CCSXML}

\ccsdesc[500]{Mathematics of computing~Nonconvex optimization}

\keywords{Decentralized stochastic optimization, Relaxed smoothness, Nonconvex optimization}


\maketitle

\section{Introduction}

This paper considers the decentralized stochastic optimization over a connected network of $n$ nodes. Specifically, we study the minimization problem
\begin{align*}
    \min_{\vx\in\BR^d} f(\vx)=\frac{1}{m}\sum_{i=1}^m f_i(\vx),
\end{align*}
where $f_i:\BR^d\to\BR$ is the local function on $i$th agent and has the form of 
\begin{align*}
f_i(\vx) = \BE\left[F_i(\vx;\vxi_i)\right].    
\end{align*}
We are interested in the setting that each $f_i$ is $(L_0,L_1)$-smooth, also known as relaxed smooth~\citep{zhang2020gradient}, but possibly nonconvex and the local functions are heterogeneous, i.e., each random variable $\vxi_i$ follows the distribution $\fD_i$. 
The optimization theory and algorithms for relaxed smooth problem have received significant attention in recent years \citep{chen2023generalized,vankov2024optimizing,li2024problem,li2023convergence,gaash2025convergence,gorbunov2024methods,koloskova2023revisiting,chezhegov2025convergence,tovmasyan2025revisiting,crawshaw2025complexity,lobanov2024linear,tyurin2024toward,reisizadeh2025variance,xie2024trust,zhang2020improved,yu2025mirror,khirirat2024error,khirirat2024communication,jiang2025decentralized,borodich2025nesterov}, 
since the empirical studies suggest the relaxed smoothness can better characterize the training of neural network models than the classical smoothness \citep{zhang2020gradient,cooper2024empirical,crawshaw2022robustness}.
However, most existing works focus on the centralized setting, and how to well address the relaxed smooth optimization problem in the decentralized scenarios remains unclear.

The main challenge in decentralized optimization is that each agent can only access the information from itself or its neighbors, which leads to the consensus error among local variables \citep{nedic2018network}.
For the standard smooth optimization, the gradient tracking technique successfully uses the gradient Lipschitz continuity to reduce the consensus error and guarantees the convergence of the algorithms \citep{nedic2017achieving,qu2017harnessing}.
Additionally, the multi-consensus steps with Chebyshev acceleration \citep{liu2011accelerated,arioli2014chebyshev} can further improve the communication complexity, leading to the optimal decentralized methods for both convex and nonconvex cases \citep{song2024optimal,xin2020general,hendrikx2021optimal,li2022variance,metelev2024decentralized,liu2024decentralized,luo2022complexity,ye2023multi,bai2024complexity,yuan2022revisiting,lu2021optimal}.
For nonsmooth optimization, the recent works focus on finding the sub-optimal solutions or the approximate (generalized) stationary points of the Lipschitz continuous objective \citep{sahinoglu2024online,lin2023decentralized,kovalev2024lower,lan2020communication}, establishing efficient decentralized stochastic algorithms based on the boundedness of local (sub)gradients.

In this paper, we focus on decentralized optimization under the relaxed smoothness, which is more challenging than above classical settings since neither the local functions nor their gradients enjoy the Lipschitz continuity.
We propose a novel algorithm called decentralized normalized stochastic gradient descent (DNSGD) to address above issues.
Our method does not include the gradient clipping, avoiding the potential inconsistency in the update modes of different local variables arising from the consensus errors.
We further construct a new Lyapunov function related to the product
of the gradient norm and the consensus error, which successfully characterizes the 
convergence of our decentralized method in the relaxed smooth setting that allows local gradients to be neither bounded nor Lipschitz continuous.
Moreover, we conduct experiments on the application of image classification to show the empirical advantage of our proposed method in practice.

\section{Preliminaries}

We use the bold lowercase letter with the subscript to represent the local variable, e.g., vector $\vx_i\in \mathbb{R}^{d}$ is a local variable on the $i$th agent. 
We use the bold uppercase letter to present the aggregated matrix consisting of local variables from all of the $m$ agents, e.g., 
\mbox{$\mathbf{X} = [\mathbf{x}_1, \dots, \mathbf{x}_m ]^\top\in\mathbb{R}^{m \times d}$, where $\vx_i\in \mathbb{R}^{d}$}.
We also denote the mean vector by the bold lowercase letter with bar,  e.g., $\bar{\vx} = \frac{1}{m} \vone^\top\mX\in\BR^{1\times d}$, 
where~$\vone=[1,\dots,1]^\top\in\BR^{m}$.
In addition, the notation $\| \cdot \|$ represents the Euclidean norm (or the Frobenius) norm for given vector (or matrix),
and we use $\mI$ to present the identity matrix.
For the ease of presentation, we allow the input of the function be organized as either column vector or row vector.

We impose following common assumptions for nonconvex optimization under the $(L_0, L_1)$ smoothness, a.k.a. relaxed smoothness.

\begin{asm}\label{asm:lower}
    We suppose the objective $f$ is lower bounded, i.e., there exists some $f^*\in\BR$ such that
    \begin{align*}
    f^* = \inf_{\vx\in\BR^d} f(\vx) > -\infty.
    \end{align*}
\end{asm}

\begin{asm}\label{asm:sym-1}
    We suppose each local function $f_i$ is $(L_0,L_1)$-smooth i.e., there exists some $L_0,L_1\geq0$ such that  
    \begin{align}\label{eq:asm:sym-1}
    \Norm{\nabla f_i(\vx)-\nabla f_i(\vy)} \leq \left(L_0 + L_1 \Norm{\nabla f_i(\vx)}\right)\Norm{\vx-\vy}    
    \end{align}
    for all $i\in[m]$ and $\vx,\vy\in\BR^d$ with $\Norm{\vx-\vy}\leq 1/L_1$, where we define $1/L_1=+\infty$ if~$L_1=0$.
\end{asm}

\begin{remark}
The notion of relaxed smoothness was first introduced by \citet{zhang2020gradient}, which requires the second-order differentiability. 
Here, we consider the weaker condition (Assumption~\ref{asm:sym-1}) characterized by the gradient, which is provided by \citet{zhang2020improved}.
\end{remark}

We also introduce the assumption of the gradient dissimilarity to describe the heterogeneity of the local functions, which is widely used in distributed optimization \citep{woodworth2020minibatch,gorbunov2021local,assran2019stochastic,lian2017can}.

\begin{asm}[bounded gradient dissimilarity]\label{asm:heterogeneous}
    We suppose there exists some $\zeta\geq 0$ such that the local gradient satisfies
    \begin{align*}
        \Norm{\nabla f_i(\vx) - \nabla f(\vx)} \leq \zeta
    \end{align*}
    for all $i\in[m]$ and $\vx\in\BR^d$.
\end{asm}

The following proposition shows that the objective $f$ is relaxed smooth if the local functions are relaxed smooth with bounded dissimilarity. 

\begin{prop}\label{prop:global-l0l1}
Under Assumptions \ref{asm:sym-1} and \ref{asm:heterogeneous}, the objective function $f=\frac{1}{m}\sum_{i=1}^m f_i$ is $(L_f,L_1)$-smooth with $L_f=L_0+L_1\zeta$, 
i.e., we have
\begin{align}\label{eq:global-relax-smooth}
    \Norm{\nabla f(\vx)-\nabla f(\vy)} \leq \left(L_f + L_1 \Norm{\nabla f(\vx)}\right)\Norm{\vx-\vy}    
\end{align}
for all $\vx,\vy\in\BR^d$ with $\Norm{\vx-\vy}\leq 1/L_1$.
\end{prop}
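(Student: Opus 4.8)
The plan is to reduce the global relaxed smoothness to the per-agent relaxed smoothness via the triangle inequality, and then use the dissimilarity bound to absorb the extra local gradient norms into a single additive constant. Fix $\vx,\vy\in\BR^d$ with $\Norm{\vx-\vy}\leq 1/L_1$. First I would write the gradient difference as the average of the local gradient differences, $\nabla f(\vx)-\nabla f(\vy)=\frac{1}{m}\sum_{i=1}^m(\nabla f_i(\vx)-\nabla f_i(\vy))$, and apply the triangle inequality to obtain $\Norm{\nabla f(\vx)-\nabla f(\vy)}\leq\frac{1}{m}\sum_{i=1}^m\Norm{\nabla f_i(\vx)-\nabla f_i(\vy)}$.

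Second, since the constraint $\Norm{\vx-\vy}\leq 1/L_1$ is identical for every index $i$, Assumption~\ref{asm:sym-1} applies simultaneously to all the local functions, giving $\Norm{\nabla f_i(\vx)-\nabla f_i(\vy)}\leq(L_0+L_1\Norm{\nabla f_i(\vx)})\Norm{\vx-\vy}$ for each $i$. Averaging these bounds yields $\Norm{\nabla f(\vx)-\nabla f(\vy)}\leq\big(L_0+\frac{L_1}{m}\sum_{i=1}^m\Norm{\nabla f_i(\vx)}\big)\Norm{\vx-\vy}$.

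The one substantive step is to convert the average of local gradient norms into a bound involving only the global gradient norm. Here I would invoke Assumption~\ref{asm:heterogeneous}: by the triangle inequality together with the dissimilarity bound, $\Norm{\nabla f_i(\vx)}\leq\Norm{\nabla f(\vx)}+\Norm{\nabla f_i(\vx)-\nabla f(\vx)}\leq\Norm{\nabla f(\vx)}+\zeta$ for every $i$, and hence $\frac{1}{m}\sum_{i=1}^m\Norm{\nabla f_i(\vx)}\leq\Norm{\nabla f(\vx)}+\zeta$. Substituting this into the previous display and collecting the constant term gives $\Norm{\nabla f(\vx)-\nabla f(\vy)}\leq(L_0+L_1\zeta+L_1\Norm{\nabla f(\vx)})\Norm{\vx-\vy}$, which is exactly \eqref{eq:global-relax-smooth} with $L_f=L_0+L_1\zeta$.

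There is no real obstacle in this argument: it is a direct chain of the triangle inequality and the two assumptions. The only point requiring a moment's care is that the domain restriction $\Norm{\vx-\vy}\leq 1/L_1$ is uniform across all agents, so that Assumption~\ref{asm:sym-1} may be applied termwise before averaging. The mild conceptual content is that the dissimilarity parameter $\zeta$ is absorbed into the additive smoothness constant, producing $L_f=L_0+L_1\zeta$, while the multiplicative factor in front of $\Norm{\nabla f(\vx)}$ remains $L_1$, inherited unchanged from the local functions.
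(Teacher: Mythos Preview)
Your proof is correct and follows essentially the same approach as the paper: decompose the global gradient difference into the average of local ones, apply the per-agent relaxed smoothness, and absorb the local gradient norms via the dissimilarity bound. The only cosmetic difference is that the paper squares both sides and uses Jensen (AM--QM), whereas you apply the triangle inequality directly to the norms; your version is in fact slightly cleaner.
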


We are interested in decentralize stochastic first-order methods, which can access the stochastic first-order oracle $\nabla F(\vx;\vxi_i)$ that satisfies the following assumption.

\begin{asm}\label{asm:sfo}
For given $i\in[m]$ and $\vx\in\BR^d$, we suppose the $i$th agent can draw $\vxi_i\sim\fD_i$ and access the stochastic first-order oracle $\nabla F_i(\vx;\vxi_i)$ which satisfies
\begin{align*}
    \BE[\nabla F_i(\vx;\vxi_i)] = \nabla f_i(\vx)
    \quad\text{and}\quad
    \BE\left[\Norm{\nabla F_i(\vx;\vxi_i) - \nabla f_i(\vx)}^2\right] \leq \sigma^2
\end{align*}
for some $\sigma> 0$.    
\end{asm}

In decentralized setting, each agent can only exchange information with its neighbors in per communication round. 
One communication step in decentralized optimization typically can be presented by multiplication on the mixing matrix~$\mW \in \BR^{m \times m}$, which satisfies the following standard assumption \citep{scaman2017optimal,hendrikx2021optimal,ye2023multi}.

\begin{asm}\label{ams:W}
Let $\mW\in \mathbb{R}^{m \times m}$ be the mixing matrix associated with the connected and undirected network. 
We suppose
(a) the matrix $\mW$ is symmetric with $w_{i,j} \ge 0$ for all $i,j\in[m]$, and $w_{i,j} \neq 0$ if and only if the $i$th agent and the $j$th agent are connected or $i = j$;
(b) It holds $\mathbf{0} \preceq \mW \preceq \mI$, $\mW^\top \mathbf{1}=\mW \mathbf{1} = \mathbf{1}$, and $\operatorname{null}(\mI - \mW) = \operatorname{span}(\mathbf{1})$.
\end{asm}

Note that Assumption~\ref{ams:W} implies the mixing matrix $\mW$ is doubly stochastic, which indicates that multiplying by $\mW$ on the aggregated matrix encourages all local variables converge to their average.
We use $\lambda_2(\mW)$ to present the second largest eigenvalue of the mixing matrix $\mW$. Under Assumption~\ref{ams:W}, we have $\lambda_2(\mW)\in[0,1)$, then we define the spectral gap of $\mW$ as $\gamma=1-\lambda_2(\mW)$.
Additionally, we present the multi-consensus steps with Chebyshev acceleration in Algorithm \ref{alg:fm} \citep{liu2011accelerated,arioli2014chebyshev}, which encourage all local variables to converge to the average as follows.

\begin{prop}[{\citet[Proposition 1]{ye2023multi}}]
\label{proposition_fastmix}
    Under Assumption \ref{ams:W}, the output of Algorithm~\ref{alg:fm} holds $\frac{1}{m} \mathbf{1}^\top \mY^{(K)} = \bar{\vy}^{(0)}$ and
    \begin{align*}
    \big\|\mY^{(K)} - \mathbf{1} \bar{\vy}^{(0)} \big\| \leq c_1 \big(1 - c_2 \sqrt{1-\lambda_2(\mW)}\,\big)^K \big\| \mY^{(0)} - \mathbf{1} \bar{\vy}^{(0)}\big\|,     
    \end{align*}    
    where $\bar{\vy}^{(0)} = \frac{1}{m} \mathbf{1}^\top \mY^{(0)}$, $c_1 = \sqrt{14}$, and $c_2 = 1 - 1/\sqrt{2}$.
\end{prop}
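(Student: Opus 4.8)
The plan is to exploit that the Chebyshev-accelerated consensus procedure in Algorithm~\ref{alg:fm} is nothing but repeated application of a fixed degree-$K$ polynomial of the mixing matrix. First I would show, by induction on the three-term recurrence defining the iterates, that $\mY^{(K)} = p_K(\mW)\,\mY^{(0)}$ for a scalar polynomial $p_K$ of degree $K$, and that the normalization built into the recurrence forces $p_K(1)=1$. Concretely one takes $p_K(x) = T_K(\ell(x))/T_K(\ell(1))$, where $T_K$ is the degree-$K$ Chebyshev polynomial of the first kind and $\ell$ is the affine map sending the interval containing the spectrum of $\mW$ on $\vone^\perp$ into $[-1,1]$; the denominator $T_K(\ell(1))$ is exactly what normalizes $p_K(1)=1$.

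Given this representation, the mean-preservation claim is immediate. Since $\mW$ is symmetric (Assumption~\ref{ams:W}) so is $p_K(\mW)$, and $\mW\vone=\vone$ gives $p_K(\mW)\vone = p_K(1)\vone = \vone$. Hence
\begin{align*}
\vone^\top \mY^{(K)} = \vone^\top p_K(\mW)\,\mY^{(0)} = \big(p_K(\mW)\vone\big)^\top \mY^{(0)} = \vone^\top \mY^{(0)},
\end{align*}
and dividing by $m$ yields $\tfrac1m\vone^\top\mY^{(K)} = \bar{\vy}^{(0)}$.

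For the consensus error I would use $p_K(\mW)\vone=\vone$ once more to factor out the preserved mean, writing
\begin{align*}
\mY^{(K)} - \vone\bar{\vy}^{(0)} = p_K(\mW)\mY^{(0)} - p_K(\mW)\vone\bar{\vy}^{(0)} = p_K(\mW)\big(\mY^{(0)} - \vone\bar{\vy}^{(0)}\big).
\end{align*}
Each column of $\mY^{(0)} - \vone\bar{\vy}^{(0)}$ is orthogonal to $\vone$, and diagonalizing the symmetric $\mW$ shows that its eigenvalues on $\vone^\perp$ lie in $[0,\lambda_2(\mW)]$ by $\mathbf{0}\preceq\mW\preceq\mI$. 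Thus the operator norm of $p_K(\mW)$ restricted to $\vone^\perp$ equals $\max_{\lambda\in[0,\lambda_2(\mW)]}\lvert p_K(\lambda)\rvert$, which gives
\begin{align*}
\big\|\mY^{(K)} - \vone\bar{\vy}^{(0)}\big\| \le \Big(\max_{\lambda\in[0,\lambda_2(\mW)]}\lvert p_K(\lambda)\rvert\Big)\,\big\|\mY^{(0)} - \vone\bar{\vy}^{(0)}\big\|.
\end{align*}

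It remains to control the scalar factor, which is the heart of the argument. On the target interval $\ell$ maps into $[-1,1]$, so $\lvert T_K(\ell(\lambda))\rvert\le 1$ and the factor is at most $1/T_K(\ell(1))$ with $\ell(1)>1$. Using the standard lower bound $T_K(\xi)\ge \tfrac12\big(\xi+\sqrt{\xi^2-1}\big)^K$ for $\xi>1$, substituting $\xi=\ell(1)$ expressed through $\lambda_2(\mW)$, and applying an elementary inequality to convert the resulting base into the spectral-gap form, I would bound the factor by $c_1\big(1-c_2\sqrt{1-\lambda_2(\mW)}\big)^K$. The main obstacle is precisely this last estimation: pinning down the affine map $\ell$ used by the algorithm and then verifying, uniformly over $\lambda_2(\mW)\in[0,1)$, that the Chebyshev base is dominated by $1-c_2\sqrt{1-\lambda_2(\mW)}$ while the constant prefactor absorbs into $c_1=\sqrt{14}$, with $c_2 = 1-1/\sqrt2$. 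Everything upstream is routine linear algebra; the constants are where the care is needed.
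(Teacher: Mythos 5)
Your high-level architecture is the right one, and it is essentially the skeleton behind the proof that this paper imports from \citet[Proposition 1]{ye2023multi} rather than proving itself: write $\mY^{(K)} = p_K(\mW)\,\mY^{(0)}$ for a degree-$K$ polynomial with $p_K(1)=1$, use symmetry and $\mW\vone=\vone$ to get mean preservation, and reduce the consensus bound to $\max_{\lambda\in[0,\lambda_2(\mW)]}|p_K(\lambda)|$ on $\vone^\perp$. The mean-preservation half of your argument is correct as written.

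The genuine gap is your concrete identification $p_K(x) = T_K(\ell(x))/T_K(\ell(1))$. That normalized first-kind Chebyshev polynomial is produced by the \emph{nonstationary} Chebyshev semi-iterative method, whose momentum weights change every iteration according to the Chebyshev three-term recursion. Algorithm~\ref{alg:fm} instead uses a \emph{fixed} $\eta_y$ (stationary second-order Richardson iteration). Unrolling its recursion with the initialization $\mY^{(-1)}=\mY^{(0)}$ gives $p_0(x)=1$, $p_1(x)=(1+\eta_y)x-\eta_y$, $p_{k+1}(x)=(1+\eta_y)x\,p_k(x)-\eta_y p_{k-1}(x)$, and the substitution $z=(1+\eta_y)x/(2\sqrt{\eta_y})$ yields
\begin{align*}
p_k(x) = \eta_y^{k/2}\left(U_k(z) - \sqrt{\eta_y}\,U_{k-1}(z)\right),
\end{align*}
a combination of \emph{second-kind} Chebyshev polynomials, not $T_K(\ell(x))/T_K(\ell(1))$. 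This breaks your key quantitative step: on the relevant interval $|U_k(z)|$ can be as large as $k+1$ (indeed the choice of $\eta_y$ makes $(1+\eta_y)^2\lambda_2^2 = 4\eta_y$, i.e., the per-eigenvalue companion matrix is exactly critically damped at $\lambda=\lambda_2(\mW)$, a Jordan block), so the scalar factor is of order $(K+1)\,\eta_y^{K/2}$ rather than $1/T_K(\ell(1))$, and the inequality ``$|T_K(\ell(\lambda))|\le 1$ hence factor $\le 1/T_K(\ell(1))$'' simply does not apply to this algorithm. The actual proof must do the work your sketch skips: bound powers of the non-normal $2\times 2$ companion matrix per eigenvalue, whose spectral radius is $\sqrt{\eta_y}$, and then trade the polynomially growing prefactor against the slack between $\sqrt{\eta_y}$ and the strictly larger base $1-(1-1/\sqrt{2})\sqrt{1-\lambda_2(\mW)}$ — that absorption is precisely where the constants $c_1=\sqrt{14}$ and $c_2 = 1-1/\sqrt{2}$ come from. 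A telltale sign of the mismatch is that your route would naturally produce $c_1=2$ with the optimal Chebyshev rate, which is a statement about a different (weight-varying) algorithm than the one in the paper.
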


\begin{algorithm}[t]
	\caption{$\AG(\mY^{(0)},\mW, K)$} \label{alg:fm}
	\begin{algorithmic}[1]
		\STATE $\mY^{(-1)}=\mY^{(0)}$ \\[0.1cm]
        \STATE $\eta_y=(1-\sqrt{1-(\lambda_2(\mW))^2}\,)/(1+\sqrt{1-(\lambda_2(\mW))^2}\,)$ \\[0.1cm]
		\STATE \textbf{for} $k = 0, 1, \dots, K$ \textbf{do}\\[0.1cm]
		\STATE\quad $\mY^{(k+1)}=(1+\eta_y)\mW\mY^{(k)}-\eta_y\mY^{(k-1)}$ \\[0.1cm]
		\STATE\textbf{end for} \\[0.1cm]
		\STATE \textbf{Output:} $\mY^{(K)}$
	\end{algorithmic}
\end{algorithm}

\section{The Algorithm and Main Results}

\begin{algorithm}[t]
	\caption{DNSGD} \label{alg:DNSGD}
	\begin{algorithmic}[1]
		\STATE \textbf{Input:} $\bx^0\in\BR^{1\times d}$, $\mW\in\BR^{m\times m}$, $\eta>0$, $\hat K,K,T,b\in\BN$ \\[0.15cm]
        \STATE $\mX^0=\vone\bx^0$;\\
        \STATE i.i.d sample $\vxi_{i,k}^0\sim\fD_i$ for all $k\in[b]$ at each agent $i$  \\[0.12cm]
        \STATE \label{line:G0} $\mG^0 = \begin{bmatrix}
              \displaystyle{\frac{1}{b}\sum_{k=1}^{b} \nabla F_1(\bx^0;\vxi_{1,k}^0)} 
              &\!\!\!\!\dots\!\!\!\! & 
              \displaystyle{\frac{1}{b}\sum_{k=1}^{b} \nabla F_m(\bx^0;\vxi_{m,k}^0)}
            \end{bmatrix}^\top$ \\[0.15cm]
        \STATE\label{line:V0} $\mV^0 = \AG\big(\mG^0, \mW, \hat K\big)$ \\[0.15cm]        
        \STATE \textbf{for} $t = 0, 1, \dots, {T-1}$ \textbf{do} \\[0.12cm]   		     
        \STATE\quad\label{line:U} $\mU^t = \begin{bmatrix}
            \dfrac{\vv^t_1}{\Norm{\vv^t_1}} & \dots & \dfrac{\vv^t_m}{\Norm{\vv^t_m}}
        \end{bmatrix}^\top$ \label{line:update-V} \\[0.15cm]
        \STATE\quad $\mX^{t+1} = \AG\left(\mX^{t} - \eta\mU^t, \mW, K\right)$ \label{line:update-X} \\[0.15cm]
        \STATE\quad i.i.d sample $\vxi_{i,k}^{t+1}\sim\fD_i$ for all $k\in[b]$ at each agent $i$
        \STATE\quad \label{line:Gt} $\mG^{t+1} = \begin{bmatrix}
              \displaystyle{\frac{1}{b}\sum_{k=1}^b \nabla F_1(\vx_1^{t+1};\vxi_{1,k}^{t+1})} &\!\!\!\!\dots\!\!\!\! & 
              \displaystyle{\frac{1}{b}\sum_{k=1}^b \nabla F_m(\vx_m^{t+1};\vxi_{m,k}^{t+1})}
            \end{bmatrix}^\top$ \\[0.15cm]
        \STATE\quad \label{line:Vt1} $\mV^{t+1} = \AG\left(\mV^{t} + \mG^{t+1} - \mG^{t}, \mW, K\right)$ \\[0.15cm]       
        \STATE\textbf{end for} \\[0.15cm]
        \STATE $\hat\vx_i\sim{\rm Unif}\{\vx_i^0,\dots,\vx_i^{T-1}\}$ for each agent $i$\\[0.15cm] 
		\STATE \textbf{Output:} $\hat\vx_i$ for each agent $i$
    \end{algorithmic}
\end{algorithm}

We propose decentralized normalized stochastic gradient
descent (DNSGD) in Algorithm~\ref{alg:DNSGD}, which combines the techniques of normalized gradient descent \citep{mandic2004generalized,chen2023generalized,li2024problem,hazan2015beyond,fang2018spider}, gradient tracking \citep{nedic2017achieving,qu2017harnessing}, and multi-consensus with Chebyshev acceleration \citep{scaman2017optimal,hendrikx2021optimal,ye2023multi}.
Here the matrices $\mX^t$ and~$\mV^t$ consist of the local variables and local gradient estimators, i.e.,
\begin{align*}
\mX^t = \begin{bmatrix}
    \vx_1^t \\ \vdots \\ \vx_m^t
\end{bmatrix}\in\mathbb{R}^{m \times d}    
\qquad\text{and}\qquad
\mV^t = \begin{bmatrix}
    \vv_1^t \\ \vdots \\ \vv_m^t
\end{bmatrix}\in\mathbb{R}^{m \times d}.
\end{align*}

It is worth noting that the relaxed 
smoothness condition (Assumption \ref{asm:sym-1}) does not guarantee that the local gradient is Lipschitz continuous or bounded.
However, the existing analysis of decentralized methods mainly depends on the Lipschitz continuity or the boundedness of the local gradient \citep{song2024optimal,xin2020general,hendrikx2021optimal,li2022variance,metelev2024decentralized,liu2024decentralized,luo2022complexity,ye2023multi,bai2024complexity,yuan2022revisiting,lu2021optimal,lan2020communication,kovalev2024lower,lin2023decentralized,sahinoglu2024online,li2024problem}, which is not applicable to our setting.  

To address above issues, we design the Lyapunov function
\begin{align*}
\small\begin{split}    
\Phi^t=  f(\bx^t) + \frac{3\eta}{\sqrt{m}}\left(M_0 + M_1\Norm{\nabla f(\bx^t)}\right)\Norm{\mX^t-\vone\bx^t} + \frac{2\eta}{\sqrt{m}}\Norm{\mV^t-\vone\bv^t},
\end{split}
\end{align*}
where $M_0=\sqrt{2(L_0^2 +L_1^2\zeta^2)}$ and $M_1=\sqrt{2}L_1$.
In particular, our $\Phi^t$ includes the term 
\begin{align*}
    \left(M_0 + M_1\Norm{\nabla f(\bx^t)}\right)\Norm{\mX^t-\vone\bx^t}
\end{align*}
to align with the measurement unit on the right-hand side of inequality (\ref{eq:global-relax-smooth}) for the relaxed smoothness of the objective.
This design can well address the consensus errors $\Norm{\mX^t-\vone\bx^t}$ and $\Norm{\mV^t-\vone\bv^t}$ even if the local gradient is \textit{non-Lipschitz} and \textit{unbounded}. 

In contrast, the existing analysis essentially only considers the linear combinations of the function value, gradient norm, and the consensus errors \citep{song2024optimal,xin2020general,hendrikx2021optimal,li2022variance,metelev2024decentralized,liu2024decentralized,luo2022complexity,ye2023multi,bai2024complexity,yuan2022revisiting,lu2021optimal,lan2020communication,kovalev2024lower,lin2023decentralized,sahinoglu2024online,li2024problem}, so that
it does not work on the relaxed smoothness. 

By using the Lyapunov function $\Phi^t$, we can show the main theoretical results of proposed DNSGD (Algorithm \ref{alg:DNSGD}) as follows.

\begin{thm}\label{thm:main}
    Under Assumptions \ref{asm:lower}--\ref{ams:W}, running DNSGD (Algorithm~\ref{alg:DNSGD}) by taking 
    \begin{align*}
        & \eta = \min\left\{\frac{\epsilon}{4L_f+1}, \frac{1}{2L_1}\right\},~~ 
        b \geq \max\left\{\frac{256(4L_f+1)^2\sigma^2}{mL_f^2\epsilon^2},  \frac{1024L_1^2\sigma^2}{m L_f^2}\right\}
        \\
        & T \geq \max\left\{\frac{8(4L_f+1)\Delta_{\Phi}}{\epsilon^2}, \frac{16L_1\Delta_{\Phi}}{\epsilon}\right\},~~ 
        K = \fO\left(\frac{\log {(\sqrt{m}(1+\eta)})}{\sqrt{\gamma}}\right)
    \end{align*}
    guarantees $\BE\left[\Norm{\nabla f(\hat\vx)}\right] \leq \epsilon/2$, where $\hat\vx$ is uniformly sampled from~$\{\bx^0,\dots,\bx^{T-1}\}$, $L_f=L_0+L_1\zeta$, and
    \begin{align*} 
    \Delta_{\Phi} = f(\bx^0) - \inf_{\vx\in\BR^d} f(\vx) + \frac{2\eta}{\sqrt{m}}\BE\left[\Norm{\mV^0-\vone\bv^0}\right]. 
    \end{align*}
\end{thm}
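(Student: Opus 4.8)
The plan is to track the \emph{averaged} iterate $\bx^t=\frac1m\vone^\top\mX^t$ and exploit that $\AG$ preserves the mean (Proposition~\ref{proposition_fastmix}). First I would observe that, since $\frac1m\vone^\top\AG(\mathbf Y,\mW,K)=\bar{\mathbf y}$, the averaged recursions collapse to $\bx^{t+1}=\bx^t-\eta\bu^t$ with $\bu^t=\frac1m\sum_{i=1}^m \vv_i^t/\Norm{\vv_i^t}$, and that the gradient-tracking average is invariant, $\bv^t=\bg^t=\frac1m\sum_{i=1}^m\frac1b\sum_{k}\nabla F_i(\vx_i^t;\vxi_{i,k}^t)$, by induction on $t$. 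Because $\Norm{\bu^t}\le 1$ we have $\Norm{\bx^{t+1}-\bx^t}\le\eta\le 1/(2L_1)<1/L_1$, so the relaxed-smoothness descent lemma implied by Proposition~\ref{prop:global-l0l1} applies to the averaged step, giving
\begin{align*}
f(\bx^{t+1})\le f(\bx^t)-\eta\inner{\nabla f(\bx^t)}{\bu^t}+\tfrac{\eta^2}{2}\big(L_f+L_1\Norm{\nabla f(\bx^t)}\big)\Norm{\bu^t}^2.
\end{align*}

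Next I would lower-bound the inner product. The elementary identity $\inner{\vg}{\vv/\Norm{\vv}}\ge\Norm{\vg}-2\Norm{\vv-\vg}$, averaged over agents, yields $\inner{\nabla f(\bx^t)}{\bu^t}\ge\Norm{\nabla f(\bx^t)}-\frac2m\sum_i\Norm{\vv_i^t-\nabla f(\bx^t)}$. I would then split $\vv_i^t-\nabla f(\bx^t)$ into the tracking consensus error $\vv_i^t-\bv^t$, the stochastic error $\bg^t-\frac1m\sum_j\nabla f_j(\vx_j^t)$, and the displacement term $\frac1m\sum_j(\nabla f_j(\vx_j^t)-\nabla f_j(\bx^t))$. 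The displacement term is controlled by local relaxed smoothness together with $\Norm{\nabla f_i(\bx^t)}\le\Norm{\nabla f(\bx^t)}+\zeta$, which gives the pointwise bound $L_0+L_1\Norm{\nabla f_i(\bx^t)}\le M_0+M_1\Norm{\nabla f(\bx^t)}$ and hence a contribution proportional to $\frac1{\sqrt m}(M_0+M_1\Norm{\nabla f(\bx^t)})\Norm{\mX^t-\vone\bx^t}$; the tracking error contributes $\frac1{\sqrt m}\Norm{\mV^t-\vone\bv^t}$; and the stochastic error is $O(\sigma/\sqrt{mb})$ in expectation because the batch averages $mb$ independent samples. This is exactly why the two consensus quantities appear in $\Phi^t$ with the weights $3\eta/\sqrt m$ and $2\eta/\sqrt m$ --- they must dominate these error terms with a margin. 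Using $\eta\le 1/(2L_1)$ to absorb $\tfrac{\eta^2 L_1}{2}\Norm{\nabla f(\bx^t)}$ into $-\eta\Norm{\nabla f(\bx^t)}$ leaves a net $-\tfrac34\eta\Norm{\nabla f(\bx^t)}$ drift.

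The third step is to close the recursion on the two consensus errors using the linear contraction of $\AG$ (Proposition~\ref{proposition_fastmix}): with $\rho=1-c_2\sqrt\gamma$, I obtain $\Norm{\mX^{t+1}-\vone\bx^{t+1}}\le c_1\rho^K(\Norm{\mX^t-\vone\bx^t}+\eta\Norm{\mU^t-\vone\bu^t})$ and an analogous bound for $\mV$ driven by $\Norm{\mG^{t+1}-\mG^t}$. Since $\Norm{\mU^t-\vone\bu^t}\le\sqrt m$ and, via Assumption~\ref{asm:sym-1}, $\Norm{\mG^{t+1}-\mG^t}$ is governed by the per-agent displacements $\Norm{\vx_i^{t+1}-\vx_i^t}$ (themselves $O(\eta)$ after consensus), choosing $K=\fO(\log m/\sqrt\gamma)$ makes $c_1\rho^K$ small enough (roughly $1/\sqrt m$) that both consensus errors stay at the $O(\eta)$ scale and, in particular, $\Norm{\vx_i^t-\bx^t}\le 1/L_1$, so that every local application of Assumption~\ref{asm:sym-1} is legitimate; I would certify this smallness by a simultaneous induction on $t$ for the two errors. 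Feeding these recursions into $\Phi^{t+1}-\Phi^t$, the change in the prefactor $M_0+M_1\Norm{\nabla f(\bx^t)}$ (bounded by smoothness and the $O(\eta)$ step) multiplied by the consensus error is the delicate cross term; the $3$-versus-$2$ weighting is tuned precisely so these cross terms, the tracking mismatch, and the $\mU$-injection cancel, yielding $\BE[\Phi^{t+1}]\le\BE[\Phi^t]-\tfrac\eta2\Norm{\nabla f(\bx^t)}+O(\eta^2 L_f)+O(\eta\sigma/\sqrt{mb})$ in expectation.

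Finally I would telescope from $0$ to $T-1$, use $\Phi^T\ge f^*$ (the consensus terms are nonnegative) so that $\Phi^0-\BE[\Phi^T]\le\Delta_{\Phi}$, and divide by $\eta T/2$ to obtain $\frac1T\sum_t\BE[\Norm{\nabla f(\bx^t)}]\le \frac{2\Delta_{\Phi}}{\eta T}+O(\eta L_f)+O(\sigma/\sqrt{mb})$. The stated choices of $\eta$, $b$, and $T$ make each of the three terms at most a fixed fraction of $\epsilon$, giving $\BE[\Norm{\nabla f(\hat\vx)}]\le\epsilon/2$. The main obstacle I anticipate is the third step: unlike the standard-smooth case, the driving term $\Norm{\mG^{t+1}-\mG^t}$ of the tracking error scales with the (unbounded) gradient norm through $L_1\Norm{\nabla f_i}$, so the consensus-error recursion and the descent inequality are genuinely coupled through $\Norm{\nabla f(\bx^t)}$. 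Decoupling them --- showing the product-form Lyapunov function contracts despite this feedback while simultaneously certifying the $1/L_1$ trust-region constraint by induction --- is the crux, and is precisely what the product term $(M_0+M_1\Norm{\nabla f(\bx^t)})\Norm{\mX^t-\vone\bx^t}$ in $\Phi^t$ is engineered to handle.
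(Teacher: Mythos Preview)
Your proposal is correct and follows essentially the same architecture as the paper: both track the averaged iterate (using that $\AG$ preserves means and that $\bv^t=\bg^t$), apply the $(L_f,L_1)$-descent lemma to $\bx^t$, control the inner product via the three-way split (tracking consensus, stochastic noise, displacement under relaxed smoothness plus dissimilarity), derive contraction recursions for $\Norm{\mX^t-\vone\bx^t}$ and $\Norm{\mV^t-\vone\bv^t}$ from Proposition~\ref{proposition_fastmix}, feed everything into the product-form Lyapunov $\Phi^t$, and telescope. Your identification of the cross term---the change in $M_0+M_1\Norm{\nabla f(\bx^{t+1})}$ multiplied by the $\mX$-consensus error---as the delicate coupling is exactly right; the paper handles it by first proving the one-step bound $\Norm{\nabla f(\bx^{t+1})}\le L_f\eta+(1+L_1\eta)\Norm{\nabla f(\bx^t)}$ (its Lemma~\ref{lem:recursion-grad}) and then absorbing the resulting terms into the $3M_0,3M_1$ budget, which is what you sketch.

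The one place where your route is genuinely a bit different is the inner-product estimate. You apply the elementary per-agent inequality $\inner{\vg}{\vv/\Norm{\vv}}\ge\Norm{\vg}-2\Norm{\vv-\vg}$ directly to each $\vv_i^t$ and average, which immediately produces the term $\frac{2}{m}\sum_i\Norm{\vv_i^t-\nabla f(\bx^t)}$. The paper instead routes the argument through $\bv^t$: it writes $\inner{\nabla f(\bx^t)}{\bu^t}$ as $\inner{\nabla f(\bx^t)-\bv^t}{\bu^t}+\inner{\bv^t}{\bu^t}$, then controls $\inner{\bv^t}{\bu^t-\bv^t/\Norm{\bv^t}}$ via the normalization lemma $\Norm{\bu^t-\bv^t/\Norm{\bv^t}}\le \frac{1}{m\Norm{\bv^t}}\sum_i\Norm{\bv^t-\vv_i^t}$. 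Both decompositions land on the same consensus quantities $\frac{1}{\sqrt m}\Norm{\mV^t-\vone\bv^t}$, $\frac{1}{\sqrt m}(M_0+M_1\Norm{\nabla f(\bx^t)})\Norm{\mX^t-\vone\bx^t}$, and $\sigma/\sqrt{mb}$ with comparable constants; your version is a touch shorter, the paper's makes the role of $\bv^t$ as a centralized surrogate more explicit. Neither buys a materially different constant in the end.
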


\begin{remark}
In real-world applications, we typically do not require $\epsilon$ to be very small, so that the batch size $b=\fO(1/\epsilon^2)$ is not necessary to be very large.
Additionally, the batch size $b=\fO(1/\epsilon^2)$ is a standard setting in the analysis of stochastic distributed optimization, which is necessary to simultaneously achieve the tight sample complexity and the tight communication complexity in theoretical. 
Recall that even for the standard smoothness assumption ($L_1=0$), the lower bounds on the overall sample complexity and the overall communication complexity are $\fO(1/\epsilon^4)$ and $\fO(1/\epsilon^2)$, respectively \cite{lu2021optimal,yuan2016convergence}. 
Hence, after every communication round, we need at least the sample complexity of~$\fO(1/\epsilon^2)$ on average, matching the setting of batch size $\fO(1/\epsilon^2)$ in our theoretical analysis.    
\end{remark}

Note that the expected $(\epsilon/2)$-stationary point $\hat\vx$ in Theorem~\ref{thm:main} is based on the mean vectors~$\{\bx^t\}_{t=0}^{T-1}$, which are unavailable in decentralized setting. 
In addition, the expression of $\Delta_{\Phi}$ indicates the initial consensus error $\Norm{\mV^0-\vone\bv^0}$ may heavily affect the iteration number $T$ if it is significantly larger than the standard initial function value gap
$\Delta_f=f(\bx^0) - \inf_{\vx\in\BR^d} f(\vx)$
used in the analysis of centralized optimization.
Fortunately, we can address above issues by appropriate setting of $\hat K$ in line \ref{line:V0} to reduce the magnitude of $\Norm{\mV^0-\vone\bv^0}$ to guarantee
$\Delta_\Phi=\Theta(\Delta_f)$.
Finally, we achieve the refined upper complexity bounds as follows.
\begin{cor}\label{cor:main}
    Following the setting of Theorem \ref{thm:main}, we further set the initial communication rounds as $\hat K=\tilde\fO(1/\sqrt{\gamma})$, then it holds that~$\Delta_\Phi=\Theta(\Delta_f)$ and the output of DNSGD (Algorithm~\ref{alg:DNSGD}) guarantees that $\BE\left[\Norm{\nabla f(\hat\vx_i)}\right] \leq \epsilon$ for each agent $i\in[m]$, where $\hat\vx_i$ is uniformly sampled from $\{\vx_i^0,\dots,\vx_i^{T-1}\}$ generated by the $i$th agent.
    Additionally, each agent requires the sample complexity of
    \begin{align}\label{eq:upper-bound-sample}
    Tb = \fO\left(\frac{1}{m}\left(\frac{L_f\sigma^2\Delta_f}{\epsilon^4} + \frac{\sigma^2}{\epsilon^2} + \frac{L_1^3\sigma^2\Delta_f}{L_f^2 \epsilon} + \frac{L_1^2\sigma^2 }{L_f^2} \right)\right) 
    \end{align}
    and the algorithm requires the communication complexity of 
    \begin{align*}
        TK+\hat K = \tilde\fO\left(\frac{L_f\Delta_f}{\sqrt{\gamma}\epsilon^2} + \frac{L_1\Delta_f}{\sqrt{\gamma}\epsilon}\right),
    \end{align*}
    where $L_f=L_0+L_1\zeta$, $\Delta_f=f(\bx^0) - \inf_{\vx\in\BR^d} f(\vx)$, and we use the notation $\tilde\fO(\cdot)$ to hide the log factors.
\end{cor}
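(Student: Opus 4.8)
The plan is to derive Corollary~\ref{cor:main} from Theorem~\ref{thm:main} in three moves: (i) tune $\hat K$ so that the initial tracking error is negligible and $\Delta_\Phi=\Theta(\Delta_f)$; (ii) transfer the stationarity guarantee from the (unavailable) mean iterates $\{\bx^t\}$ to each agent's own iterates $\{\vx_i^t\}$; and (iii) substitute the smallest admissible $T$ and $b$ and simplify the resulting products. Step (ii) is where the real work lies; steps (i) and (iii) are essentially bookkeeping on top of the estimates already produced in the proof of Theorem~\ref{thm:main}.

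For step (i), recall that $\mV^0=\AG(\mG^0,\mW,\hat K)$ and that $\AG$ preserves the row mean, so $\bv^0=\bg^0$. I would bound the raw tracking dispersion $\BE\Norm{\mG^0-\vone\bg^0}$ using the gradient dissimilarity (Assumption~\ref{asm:heterogeneous}) together with the minibatch variance $\sigma^2/b$ from Assumption~\ref{asm:sfo}, giving a bound of order $\fO(\sqrt m(\zeta+\sigma))$. Proposition~\ref{proposition_fastmix} then contracts this by the factor $c_1(1-c_2\sqrt\gamma)^{\hat K}$, so choosing $\hat K=\Theta(\gamma^{-1/2}\log(\cdot))=\tilde{\fO}(\gamma^{-1/2})$ makes $\frac{2\eta}{\sqrt m}\BE\Norm{\mV^0-\vone\bv^0}\le\Delta_f$; since $\eta$ is already small this is comfortable, and hence $\Delta_\Phi\le 2\Delta_f=\Theta(\Delta_f)$. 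The logarithmic dependence on the problem constants is exactly what the $\tilde{\fO}$ absorbs.

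Step (ii) is the main obstacle. From Theorem~\ref{thm:main} I have $\frac1T\sum_{t=0}^{T-1}\BE\Norm{\nabla f(\bx^t)}\le\epsilon/2$, and by the triangle inequality it suffices to show the averaged deviation $\frac1T\sum_t\BE\Norm{\nabla f(\vx_i^t)-\nabla f(\bx^t)}\le\epsilon/2$ for every fixed $i$. The crucial quantitative input is a per-agent consensus bound $\Norm{\vx_i^t-\bx^t}\le C\eta$ with a small \emph{dimension-free} constant $C$, which should already be available from the proof of Theorem~\ref{thm:main}: the recursion $\mX^{t+1}=\AG(\mX^t-\eta\mU^t,\mW,K)$ together with $\Norm{\mU^t-\vone\bu^t}\le\Norm{\mU^t}=\sqrt m$ (each row of $\mU^t$ is a unit vector) yields a geometric recursion for $\Norm{\mX^t-\vone\bx^t}$ with contraction factor $c_1(1-c_2\sqrt\gamma)^K$, and the choice $K=\fO(\gamma^{-1/2}\log m)$ is calibrated precisely to make this factor at most $\fO(1/\sqrt m)$. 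This cancels the $\sqrt m$ coming from $\Norm{\mU^t}$ and forces $\Norm{\mX^t-\vone\bx^t}=\fO(\eta)$ in Frobenius norm (with the hidden constant chosen large enough to make $C$ as small as desired), hence $\Norm{\vx_i^t-\bx^t}\le C\eta$ for each $i$. Given this, I would invoke the relaxed smoothness of $f$ (Proposition~\ref{prop:global-l0l1}); its precondition $\Norm{\vx_i^t-\bx^t}\le 1/L_1$ holds because $C\eta\le C/(2L_1)$. Writing $\Norm{\nabla f(\vx_i^t)-\nabla f(\bx^t)}\le(L_f+L_1\Norm{\nabla f(\bx^t)})\Norm{\vx_i^t-\bx^t}$, the first piece is $\le L_fC\eta\le C\epsilon/4$ via $\eta\le\epsilon/(4L_f+1)$, while the coupled piece is $\le\frac{C}{2}\Norm{\nabla f(\bx^t)}$ via $\eta\le 1/(2L_1)$. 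Averaging over $t$ and feeding in the Theorem~\ref{thm:main} first-moment bound controls the coupled term by $\frac{C}{2}\cdot\frac{\epsilon}{2}$, so with $C\le 1$ the total deviation is $\le\epsilon/2$, giving $\BE\Norm{\nabla f(\hat\vx_i)}\le\epsilon$. The delicate point, and the reason the standard linear-Lyapunov analyses fail, is that the factor $\Norm{\nabla f(\bx^t)}$ multiplying the consensus error is neither bounded nor controlled pointwise; it is tamed only in an averaged sense, so the argument must retain the product $L_1\Norm{\nabla f(\bx^t)}\Norm{\vx_i^t-\bx^t}$ and exploit $\eta\le 1/(2L_1)$ rather than bounding the two factors separately.

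Finally, for step (iii) I substitute the minimal feasible values. Writing $T=\Theta(\max\{T_1,T_2\})$ and $b=\Theta(\max\{b_1,b_2\})$ with $T_1=\Theta(L_f\Delta_f\epsilon^{-2})$, $T_2=\Theta(L_1\Delta_f\epsilon^{-1})$, $b_1=\Theta(\sigma^2m^{-1}\epsilon^{-2})$, and $b_2=\Theta(L_1^2\sigma^2m^{-1}L_f^{-2})$, the key observation is the exact identity $b_1/b_2=(T_1/T_2)^2$: the two maxima therefore switch together, so $T_1\ge T_2\iff b_1\ge b_2$ and $Tb=\max\{T_1b_1,T_2b_2\}$ with no cross terms. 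The two surviving products give the $L_f\sigma^2\Delta_f\epsilon^{-4}$ and $L_1^3\sigma^2\Delta_fL_f^{-2}\epsilon^{-1}$ terms of \eqref{eq:upper-bound-sample}, while the single initialization batch of size $b=\max\{b_1,b_2\}$ contributes the remaining $\sigma^2\epsilon^{-2}$ and $L_1^2\sigma^2L_f^{-2}$ terms (using $(4L_f+1)^2=\Theta(L_f^2)$ to fold the constants). The communication count is then immediate: $TK=\tilde{\fO}(\gamma^{-1/2}(L_f\Delta_f\epsilon^{-2}+L_1\Delta_f\epsilon^{-1}))$ with $K=\fO(\gamma^{-1/2}\log m)$, and $\hat K=\tilde{\fO}(\gamma^{-1/2})$ is dominated by $TK$, matching the stated bound. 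These manipulations are routine once step~(ii) is in place.
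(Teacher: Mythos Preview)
Your three-step plan matches the paper's proof of Corollary~\ref{cor:main} in structure and in all essential ideas: the paper likewise (i) bounds $\BE\|\mV^0-\vone\bv^0\|$ via Proposition~\ref{proposition_fastmix} and absorbs the result into a logarithmic choice of $\hat K$, (ii) splits $\BE\|\nabla f(\hat\vx_i)\|$ by the triangle inequality, applies Proposition~\ref{prop:global-l0l1} together with the consensus bound $\|\mX^t-\vone\bx^t\|\le\rho m\eta/(1-\rho)$ from Lemma~\ref{lem:recursion-X}, and uses the \emph{averaged} control of $\|\nabla f(\bx^t)\|$ from Theorem~\ref{thm:main} to handle the coupled term, and (iii) reads off $Tb$ and $TK+\hat K$. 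The only cosmetic differences are that the paper bounds the raw $\|\mG^0\|$ (via $\sum_i\|\nabla f_i(\bx^0)\|^2$) rather than invoking dissimilarity, and that it does the case split on $\eta$ explicitly rather than via your ``switch-together'' identity $b_1/b_2=(T_1/T_2)^2$; your observation there is a clean explanation of why the cross terms $T_1b_2,\,T_2b_1$ do not appear.
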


The term $1/m$ in the sample complexity bound (\ref{eq:upper-bound-sample}) implies our method achieves the linear speedup. 
Although expression (\ref{eq:upper-bound-sample}) looks somewhat complicated, it is easy to understand since we can verify our bounds match or surpass the best-known results in several special cases:
\begin{itemize}[topsep=-4pt,itemsep=1.5pt,partopsep=1.8pt, parsep=1.8pt,leftmargin=0.65cm]   
    \item In the case of $L_1=0$, the local functions reduce to the setting of standard $L_0$-smooth and it leads the objective $f$ is also $L_0$-smooth without Assumption \ref{asm:heterogeneous} and $L_f=L_0$. Then Corollary~\ref{cor:main} indicates the sample complexity 
    \begin{align*}
       Tb=\fO\left(\frac{1}{m}\left(\frac{L_0\sigma^2\Delta_f}{\epsilon^4} + \frac{\sigma^2}{\epsilon^2}\right)\right) 
    \end{align*}
    and the communication complexity of
    \begin{align*}
        TK+\hat K = \tilde\fO\left(\frac{L_0\Delta_f}{\sqrt{\gamma}\epsilon^2}\right),
    \end{align*}
    respectively. This matches the (near-)optimal results for decentralized stochastic nonconvex optimization under the standard smoothness \cite[Corollary~1]{lu2021optimal}.
    \item In the case of $m=1$, the problem reduces to the relaxed smooth optimization on single machine. This leads to $\zeta=0$ and $L_f=L_0$ since there is no heterogeneity.
    Then Theorem \ref{thm:main} indicates the sample complexity of
    \begin{align}\label{eq:sample-single}
        Tb = \fO\left(\frac{L_0\sigma^2\Delta_f}{\epsilon^4} + \frac{\sigma^2}{\epsilon^2} + \frac{L_1^3\sigma^2\Delta_f}{L_0^2 \epsilon} + \frac{L_1^2\sigma^2}{L_0^2} \right), 
    \end{align}
    which is tighter than the complexity of gradient clipping method in the non-distributed case \citep{zhang2020improved}. Recall Theorem 3.2 of \citet{zhang2020improved} claims the sample complexity of 
    \begin{align*}
    \fO(L_0\sigma^2\Delta_f\epsilon^{-4} + L_0^{-3}L_1^4\sigma^2\Delta_f).    
    \end{align*}
    It is worth noting that term $L_0^{-3}L_1^4\sigma^2\Delta_f$ dominates their bound only when $\epsilon=\Omega(L_0/L_1)$, and it is larger than the term $L_0^{-2}L_1^3\sigma^2\Delta_f\epsilon^{-1}$ in our (\ref{eq:sample-single}) in this case.
    In addition, \citet{zhang2020improved} assume the algorithm can access the exact gradient at the initial point, while their complexity bound does not include the cost of achieving the the exact gradient at the initial point. 
    In contrast, our analysis considers the access of the initial gradient estimator by stochastic first-order oracle, leading to the terms~$\sigma^2\epsilon^{-2}$ and~$L_1^2L_0^{-2}\sigma^2$ in our upper bound (\ref{eq:sample-single}).
    Recently, \citet{liu2024nonconvex} study the more general stochastic gradient estimator with heavy-tailed noise, while applying their result to our setting leads to the term $\sigma^3\epsilon^{-3}$ in the upper bound on the sample complexity \citep[\mbox{Theorem 3.2~with} $p=2$]{liu2024nonconvex}. 
    \item Following the case of $m=1$, we further consider the deterministic case, i.e., replace the mini-batch stochastic gradient in Algorithm \ref{alg:DNSGD} with the exact gradient and omit the communication steps.
    Then Theorem \ref{thm:main} indicates the iteration (gradient) complexity of  
    \begin{align*}
        T=\fO\left(\frac{L_0\Delta_f}{\epsilon^2}+\frac{L_1\Delta_f}{\epsilon}\right), 
    \end{align*}
    matching the best-known result for solving relaxed smooth problem by exact gradient method on the single machine \cite[Theorem 3.1]{vankov2024optimizing}.

\end{itemize}

  
\section{The Complexity Analysis}

In this section, we briefly sketch the derivations of our main results. The details for all proofs are presented in the appendices.
We first present the following descent lemma with respect to the mean vectors $\bx^{t+1}=\frac{1}{m}\sum_{i=1}^m\vx_i^{t+1}$ and $\bx^t=\frac{1}{m}\sum_{i=1}^m\vx_i^t$.

\begin{lem}\label{lem:descent-mean-2}
Under the setting of Theorem \ref{thm:main}, we have
\begin{align*}    
\BE[f(\bx^{t+1})]   
\leq & f(\bx^t) - \eta\left(1-\frac{\eta L_1}{2}\right)\Norm{\nabla f(\bx^t)} \\
& + \frac{2\sqrt{2}\eta}{\sqrt{m}}\left(M_0+M_1\Norm{\nabla f(\bx^t)}\right)\Norm{\mX^t-\vone\bx^t} \\
& + \frac{\eta}{\sqrt{m}}\Norm{\mV^t - \vone\bv^t} + \frac{2\eta\sigma}{\sqrt{mb}} + \frac{\eta^2L_f}{2}
\end{align*}
for given $\mX^t$ and $\mV^t$.
\end{lem}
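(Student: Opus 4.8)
The plan is to reduce everything to the scalar dynamics of the network average $\bx^t$ and then apply a descent inequality for the globally $(L_f,L_1)$-smooth objective supplied by Proposition~\ref{prop:global-l0l1}. First I would record the mean dynamics. Since $\AG$ preserves the average (Proposition~\ref{proposition_fastmix}), line~\ref{line:update-X} gives $\bx^{t+1}=\bx^t-\eta\bu^t$, where $\bu^t=\frac1m\sum_{i=1}^m \vv_i^t/\Norm{\vv_i^t}$ is the average of the per-agent normalized directions from line~\ref{line:U}; telescoping the tracking update in line~\ref{line:Vt1} (together with the mean-preserving initialization) yields $\bv^t=\bg^t=\frac1m\sum_{i=1}^m\vg_i^t$, the average of the local minibatch gradients. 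Because each summand of $\bu^t$ is a unit vector, $\Norm{\bu^t}\le 1$, hence $\Norm{\bx^{t+1}-\bx^t}=\eta\Norm{\bu^t}\le\eta\le 1/(2L_1)\le 1/L_1$, which places $\bx^{t+1}$ inside the region where relaxed smoothness is valid.

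Next I would establish and apply the relaxed-smooth descent inequality. Integrating $\nabla f$ along the segment $[\bx^t,\bx^{t+1}]$ and using \eqref{eq:global-relax-smooth} on this segment gives the Taylor-type bound $f(\vy)\le f(\vx)+\inner{\nabla f(\vx)}{\vy-\vx}+\tfrac12(L_f+L_1\Norm{\nabla f(\vx)})\Norm{\vy-\vx}^2$ whenever $\Norm{\vy-\vx}\le 1/L_1$. Applying it with $\vx=\bx^t$, $\vy=\bx^{t+1}=\bx^t-\eta\bu^t$ and bounding $\Norm{\bx^{t+1}-\bx^t}^2\le\eta^2$ produces $f(\bx^{t+1})\le f(\bx^t)-\eta\inner{\nabla f(\bx^t)}{\bu^t}+\tfrac{\eta^2 L_f}2+\tfrac{\eta^2 L_1}2\Norm{\nabla f(\bx^t)}$; the last two terms already account for the $\tfrac{\eta^2 L_f}2$ summand and the $\tfrac{\eta L_1}2$ correction hidden inside $-\eta(1-\tfrac{\eta L_1}2)\Norm{\nabla f(\bx^t)}$.

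The heart of the argument is lower-bounding the normalized inner product $\inner{\nabla f(\bx^t)}{\bu^t}$, for which the nonlinearity of the per-agent normalization is the main obstacle: one cannot simply average, so I would work agent-by-agent with the elementary inequality $\inner{\va}{\vb/\Norm{\vb}}\ge\Norm{\va}-2\Norm{\va-\vb}$ (proved by writing $\vb=\va-(\va-\vb)$), specialized to $\va=\nabla f(\bx^t)$ and $\vb=\vv_i^t$, and then average over $i$. This reduces the task to controlling $\frac1m\sum_i\Norm{\nabla f(\bx^t)-\vv_i^t}$, which I would split through $\bv^t$ into three pieces: (i) the smoothness/consensus gap $\nabla f(\bx^t)-\frac1m\sum_i\nabla f_i(\vx_i^t)=\frac1m\sum_i(\nabla f_i(\bx^t)-\nabla f_i(\vx_i^t))$, bounded by Assumption~\ref{asm:sym-1} together with Assumption~\ref{asm:heterogeneous} (via $\Norm{\nabla f_i(\bx^t)}\le\Norm{\nabla f(\bx^t)}+\zeta$) and Cauchy--Schwarz as $(L_f+L_1\Norm{\nabla f(\bx^t)})\tfrac1{\sqrt m}\Norm{\mX^t-\vone\bx^t}$, then relaxed into the stated constants using $L_f\le M_0=\sqrt{2(L_0^2+L_1^2\zeta^2)}$ and $L_1\le M_1=\sqrt2 L_1$; (ii) the $\mV$-consensus error $\frac1m\sum_i\Norm{\bv^t-\vv_i^t}\le\tfrac1{\sqrt m}\Norm{\mV^t-\vone\bv^t}$; and (iii) the stochastic error $\frac1m\sum_i(\nabla f_i(\vx_i^t)-\vg_i^t)$, whose conditional expected norm is at most $\sigma/\sqrt{mb}$ by Assumption~\ref{asm:sfo}, independence across agents, and the $b$-fold averaging. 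A point to check here is that invoking per-agent relaxed smoothness in (i) requires $\Norm{\vx_i^t-\bx^t}\le 1/L_1$, i.e. a controlled consensus radius, which should follow from the multi-consensus parameter $K=\fO(\log m/\sqrt\gamma)$ under the setting of Theorem~\ref{thm:main}.

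Finally I would assemble the pieces: substituting the three bounds into $-\eta\inner{\nabla f(\bx^t)}{\bu^t}\le-\eta\Norm{\nabla f(\bx^t)}+2\eta\big(\cdots\big)$ and taking expectation over the step-$t$ minibatch (which converts piece (iii) into the rate $2\eta\sigma/\sqrt{mb}$ while leaving the iterate-measurable consensus terms and $\nabla f(\bx^t)$ untouched) recovers exactly the claimed inequality. The remaining work is purely constant bookkeeping, and the only genuinely delicate points are (a) the asymmetric constants in front of the three error terms (the $2\sqrt2$, the $1$, and the $\sigma$ rate), which force the agent-wise split above rather than a single crude triangle inequality, and (b) certifying the domain restriction so that relaxed smoothness may legitimately be applied both to $f$ on $[\bx^t,\bx^{t+1}]$ and to each $f_i$ between $\bx^t$ and $\vx_i^t$.
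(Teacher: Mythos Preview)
Your plan is structurally close to the paper's argument, and most steps (mean dynamics, the relaxed-smooth descent inequality, the $1/L_1$ radius checks, and the decomposition of $\nabla f(\bx^t)-\bv^t$ into a smoothness/consensus piece and a stochastic piece) are correct. However, the agent-by-agent inequality you propose for the normalized inner product does \emph{not} reproduce the stated constants. Applying $\inner{\va}{\vb/\Norm{\vb}}\ge\Norm{\va}-2\Norm{\va-\vb}$ with $\va=\nabla f(\bx^t)$, $\vb=\vv_i^t$ and averaging gives
\[
\inner{\nabla f(\bx^t)}{\bu^t}\ \ge\ \Norm{\nabla f(\bx^t)}-\frac{2}{m}\sum_{i=1}^m\Norm{\nabla f(\bx^t)-\vv_i^t},
\]
and after splitting through $\bv^t$ every piece inherits the prefactor $2$. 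In particular your $\mV$-consensus term lands with coefficient $\frac{2\eta}{\sqrt m}$, not the $\frac{\eta}{\sqrt m}$ the lemma asserts. This is exactly the ``asymmetric constants'' issue you flag in (a), and your proposed split does not resolve it.

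The paper obtains the sharper coefficient by centering the inner product at $\bv^t$ \emph{before} touching the normalizations: it writes $\inner{\nabla f(\bx^t)}{\bu^t}=\inner{\nabla f(\bx^t)-\bv^t}{\bu^t}+\inner{\bv^t}{\bu^t}$, bounds the first summand by $\Norm{\nabla f(\bx^t)-\bv^t}$, and for the second uses the averaging identity $\bv^t=\tfrac1m\sum_i\vv_i^t$ through the lemma
\[
\Norm{\frac{1}{m}\sum_{i=1}^m\frac{\vv_i^t}{\Norm{\vv_i^t}}-\frac{\bv^t}{\Norm{\bv^t}}}\ \le\ \frac{1}{m}\sum_{i=1}^m\frac{\Norm{\bv^t-\vv_i^t}}{\Norm{\bv^t}},
\]
which yields $\inner{\bv^t}{\bu^t}\ge\Norm{\bv^t}-\tfrac1m\sum_i\Norm{\bv^t-\vv_i^t}$ with coefficient~$1$ (the cancellation is precisely what the agent-by-agent bound misses). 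Combined with $\Norm{\bv^t}\ge\Norm{\nabla f(\bx^t)}-\Norm{\nabla f(\bx^t)-\bv^t}$, this places the factor $2$ only on $\Norm{\nabla f(\bx^t)-\bv^t}$ (whence the $2\sqrt2$ and the $2\sigma/\sqrt{mb}$), while the $\mV$-consensus keeps coefficient~$1$. Your route proves a weaker inequality that would force a renegotiation of the Lyapunov weights downstream; to prove the lemma as stated, replace the per-agent normalization bound with this centered decomposition.
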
  

In the remains of this paper, we define 
\begin{align*}
\rho = c_1\big(1 - c_2\sqrt{1 - \lambda_2(\mW)}\,\big)^K    
\end{align*}
with $c_1=\sqrt{14}$ and $c_2=1-{1}/{\sqrt{2}}$, then
Proposition~\ref{proposition_fastmix} implies we can guarantee $\rho$ be sufficient small by taking appropriate communication rounds number $K$.
We then establish the recursions for the consensus errors as follows.

\begin{lem}\label{lem:recursion-X}
For Algorithm \ref{alg:DNSGD}, we have 
\begin{align*}
\Norm{\mX^{t+1}-\vone\bx^{t+1}} 
\leq \rho\left(\Norm{\mX^t - \vone\bx^t} + m\eta\right) 
\end{align*}
and
\begin{align*}
\Norm{\mX^t-\vone\bx^t} \leq \frac{\rho m\eta}{1-\rho}.
\end{align*}
\end{lem}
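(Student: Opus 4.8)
The plan is to track how a single outer iteration of Algorithm~\ref{alg:DNSGD} acts on the consensus error, exploiting that the update $\mX^{t+1} = \AG(\mX^t - \eta\mU^t, \mW, K)$ is just an application of accelerated gossip. Writing $\mZ^t = \mX^t - \eta\mU^t$ for the input to $\AG$, its row-mean is $\bar\vz^t = \bx^t - \eta\bu^t$ with $\bu^t = \frac{1}{m}\vone^\top\mU^t$. Proposition~\ref{proposition_fastmix} then supplies two facts simultaneously: the mean is preserved, so $\bx^{t+1} = \bar\vz^t$, and the deviation from the mean contracts by the factor $\rho = c_1(1-c_2\sqrt{1-\lambda_2(\mW)})^K$, giving $\Norm{\mX^{t+1} - \vone\bx^{t+1}} \le \rho\Norm{\mZ^t - \vone\bar\vz^t}$.

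Next I would expand the input deviation. Since $\mZ^t - \vone\bar\vz^t = (\mX^t - \vone\bx^t) - \eta(\mU^t - \vone\bu^t)$, the triangle inequality yields $\Norm{\mZ^t - \vone\bar\vz^t} \le \Norm{\mX^t - \vone\bx^t} + \eta\Norm{\mU^t - \vone\bu^t}$. The crucial observation is that every row of $\mU^t$ is the normalized vector $\vv_i^t/\Norm{\vv_i^t}$, hence a unit vector, so $\Norm{\mU^t}^2 = \sum_{i=1}^m 1 = m$ in Frobenius norm; because subtracting the row-mean amounts to applying the orthogonal projection $\mI - \frac{1}{m}\vone\vone^\top$, it is non-expansive and gives $\Norm{\mU^t - \vone\bu^t} \le \Norm{\mU^t} = \sqrt m \le m$. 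Combining these bounds produces the claimed one-step recursion $\Norm{\mX^{t+1} - \vone\bx^{t+1}} \le \rho(\Norm{\mX^t - \vone\bx^t} + m\eta)$.

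For the uniform bound I would unroll this recursion starting from the initialization $\mX^0 = \vone\bx^0$ in Algorithm~\ref{alg:DNSGD}, which makes the initial consensus error vanish. The cleanest route is induction: assuming $\Norm{\mX^t - \vone\bx^t} \le \rho m\eta/(1-\rho)$, the recursion gives $\Norm{\mX^{t+1} - \vone\bx^{t+1}} \le \rho(\rho m\eta/(1-\rho) + m\eta) = \rho m\eta/(1-\rho)$, and the base case $t=0$ holds since the left side is zero; equivalently one sums the geometric series $\rho m\eta\sum_{j\ge0}\rho^j$. Both arguments require $\rho < 1$, which Proposition~\ref{proposition_fastmix} guarantees once $K$ is taken as large as in Theorem~\ref{thm:main}.

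I do not expect a serious obstacle here: the entire content is the unit-row estimate that controls $\Norm{\mU^t - \vone\bu^t}$ by a quantity independent of the (possibly unbounded) gradients. I would flag precisely this as the essential point, since it is normalization that decouples the consensus recursion from the magnitude of $\mV^t$, which is exactly what allows the argument to go through in the relaxed-smooth regime where the tracked gradients $\mV^t$ need be neither bounded nor Lipschitz.
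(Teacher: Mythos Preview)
Your proposal is correct and follows essentially the same route as the paper: apply Proposition~\ref{proposition_fastmix} to the gossip step, split the input deviation via the triangle inequality, bound $\Norm{\mU^t-\vone\bu^t}$ using $\Norm{\mU^t}=\sqrt m$ together with the non-expansiveness of the centering projection (which the paper packages as Lemma~\ref{lem:vector-mean}), and then unroll from the zero initial consensus error. Your explicit mention of $\sqrt m\le m$ and of the requirement $\rho<1$ are details the paper leaves implicit but are exactly right.
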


\begin{lem}\label{lem:recursion-V}
We take 
\begin{align*}
K\geq \frac{2+\sqrt{2}}{\sqrt{\gamma}}\log \left(\sqrt{14}(1+m \eta L_1)\right)    
\end{align*}
for Algorithm \ref{alg:DNSGD}, then it holds
\begin{align*}
   & \BE\left[\Norm{\mV^{t+1} - \vone\bv^{t+1}}\right] \\
\leq & \rho\Bigg(\Norm{\mV^t - \vone\bv^t} + \left(M_2 + M_3\Norm{\nabla f(\bx^{t})}\right)\Norm{\mX^t - \vone\bx^t} \\
& \quad~ + \left(M_2 + M_3\Norm{\nabla f(\bx^{t})}\right) m\eta + \frac{2\sqrt{m}\sigma}{\sqrt{b}}\Bigg),
\end{align*}
for given $\mX^t$ and $\mV^t$, 
where 
\begin{align*}
M_2 = (\rho+1)L_f+\rho L_fL_1\eta
\quad\text{and}\quad 
M_3 = \rho L_1(L_1\eta+1)+L_1.    
\end{align*}
\end{lem}

Applying Lemmas \ref{lem:descent-mean-2}, \ref{lem:recursion-X}, and \ref{lem:recursion-V}, we can characterize the decease of the Lyapunov function as 
\begin{align}\label{eq:Phi-decease}    
 \BE[\Phi^{t+1}]
\leq  \Phi^t - \frac{5\eta}{8}\Norm{\nabla f(\bx^t)} + \frac{3\eta^2L_f}{4}.
\end{align}
Summing equation (\ref{eq:Phi-decease}) over $t=0,\dots,T-1$, 
we can achieve our main result in Theorem~\ref{thm:main} that 
guarantees $\BE\left[\Norm{\nabla f(\hat\vx)}\right] \leq \epsilon/2$, where $\hat\vx$ is uniformly sampled from $\{\bx^1,\dots,\bx^m\}$. 

In Theorem \ref{thm:main}, the setting of iterations number $T$
depends on the term $\Delta_{\Phi}$, which may be dominated by the expected consensus error $\BE\left[\Norm{\mV^0-\vone\bv^0}\right]$.
However, the results in the single machine scenario only depend on $\Delta_f=f(\bx^0) - \inf_{\vx\in\BR^d}f(\vx)$.
To fill this gap, we set the initial communication rounds as $\hat K=\tilde\fO(1/\sqrt{\gamma})$ for line~\ref{line:V0} of Algorithm \ref{alg:DNSGD}, then Proposition \ref{proposition_fastmix} guarantees 
\begin{align*}
\frac{2\eta}{\sqrt{m}}\BE\left[\Norm{\mV^0-\vone\bv^0}\right]\leq \fO(\Delta_f)     
\end{align*}
to achieve $\Delta_{\Phi}\leq \fO(\Delta_f)$.
Therefore, the upper bound of the iteration number $T$ shown Theorem \ref{thm:main} to guarantee $\BE\left[\Norm{\nabla f(\hat\vx)}\right] \leq \epsilon/2$ can be improved to 
$\fO(L_f\Delta_f\epsilon^{-2}+L_1\Delta_f\epsilon^{-1})$.

For achieving an $\epsilon$-stationary point on each node, 
we consider the gradient at $\hat\vx_i$ as follows
\begin{align*}
 \BE[\Norm{\nabla f(\hat\vx_i)}] 
=&  \BE\left[\frac{1}{T}\sum_{t=0}^{T-1}\Norm{\nabla f(\vx_i^t)}\right]  \\
\leq & \BE\bigg[\underbrace{\frac{1}{T}\sum_{t=0}^{T-1}\Norm{\nabla f(\vx_i^t)-\nabla f(\bx^t)}}_{A_1} + \underbrace{\frac{1}{T}\sum_{t=0}^{T-1}\Norm{\nabla f(\bx^t)}}_{A_2}\bigg].
\end{align*}
For term $A_1$, Assumption \ref{asm:sym-1} implies
\begin{align*}
\BE[A_1] 
\leq & \BE\left[\frac{1}{T}\sum_{t=0}^{T-1}\big(L_f+L_1\Norm{\nabla f(\bx^t)}\big)\Norm{\mX^t - \vone\bx^t
}\right]  \\
\leq & \BE\left[\frac{1}{T}\sum_{t=0}^{T-1}\frac{\rho m\eta\big(L_f+L_1TB\big)}{1-\rho}\right],
\end{align*}
where the last step is based on 
Lemma \ref{lem:diff-bar-x} in Appendix \ref{sec:desent-proof}. 
Recall that Theorem \ref{thm:main} shows $\BE[A_2]=\BE[\Norm{\nabla f(\hat\vx)}]\leq\epsilon/2$.
In addition, we can set the communication rounds to be $K=\tilde\fO(\sqrt{1/\gamma}\,)$ for the multi-consensus step  to make $\rho$ be sufficient small to guarantee that $\BE[A_1]\leq\epsilon/2$.
Therefore, we achieve the desired $\epsilon$-stationary point $\hat\vx_i$ at each agent $i$. 

\begin{remark}
Under Assumption \ref{asm:heterogeneous}, the algorithm without gradient tracking can also guarantee the convergence, but its communication complexity will include additional dependency on $\log\zeta$ even for the standard smoothness setting, where $\zeta$ is the upper bound of gradient  dissimilarity \citep[Section 5]{yuan2022revisiting}. 
In other words, employing the gradient tracking is beneficial to the communication efficiency for large $\zeta$.
\end{remark}

\section{Discussion and Related Work}

The design of our DNSGD (Algorithm \ref{alg:DNSGD}) is based on normalized (stochastic) gradient descent, which is extensively studied in centralized scenario \citep{mandic2004generalized,chen2023generalized,cutkosky2020momentum,hazan2015beyond,fang2018spider}. 
In a related work, \cite{li2024problem} studied normalized stochastic gradient descent in decentralized setting, while their analysis is only limited to the setting of standard smoothness.
In a very recent work, \citet[Algorithm 2 and Theorem 4]{jiang2025decentralized} claims a first-order method for decentralized optimization under the relaxed smoothness, while each of their iteration requires every node to perform the step of gradient clipping which depends on accessing the mean vector $\bx_i^t$ and the corresponding exact local gradient~$\nabla f_i(\bx^t)$.
Recall that the decentralized method typically only allows each node directly communicates with its neighbors (Assumption \ref{ams:W}) and the stochastic first-order oracle can only access the unbiased gradient estimator with bounded variance (Assumption~\ref{asm:sfo}).
Therefore, we argue that their algorithm is not truly decentralized or stochastic.
In contrast, our method only requires each node to iterate with its local stochastic gradient (line \ref{line:Gt} of Algorithm \ref{alg:DNSGD}), which is fully decentralized and stochastic.

The convergence analysis for our DNSGD (Algorithm \ref{alg:DNSGD}) depends on the assumption of bounded gradient dissimilarity (Assumption \ref{asm:heterogeneous}), which is commonly used in distributed optimization \citep{woodworth2020minibatch,gorbunov2021local,assran2019stochastic,lian2017can}.
Specifically, we use this assumption to establish the inequality 
$\Norm{\nabla f(\bx^{t+1})} \leq (L_0+L_1\zeta)\eta + \left(1+L_1\eta\right)\Norm{\nabla f(\bx^{t})}$,
which connects the gradients at the points $\bx^{t}$ and $\bx^{t+1}$ 
(see Lemma \ref{lem:recursion-grad} in Appendix~\ref{sec:desent-proof}).
We also combine Assumption \ref{asm:heterogeneous} with the $(L_0,L_1)$-smoothness of local functions (Assumption \ref{asm:sym-1}) to guarantee the global objective $f=\frac{1}{m}\sum_{i=1}^m f_i$ is $(L_0+L_1\zeta,L_1)$-smooth 
(see the proof in Appendix~\ref{sec:proof-global-l0l1}).
However, how to establish the convergence for decentralized relaxed smooth optimization without bounded gradient dissimilarity (Assumption~\ref{asm:heterogeneous}) is still an open problem.
It is worth noting that simply assuming that each local function $f_i$ is $(L_0, L_1)$-smooth even cannot guarantee that the global objective function $f=\frac{1}{m}\sum_{i=1}^m f_i$ is also $(L_0, L_1)$-smooth, which is different with the standard smooth case.
We illustrate this point with the following example.

\begin{prop}\label{prop:example}
We consider functions
\begin{align*}
    f_1(x) = \exp(Lx),~~f_2(x) = \exp(-Lx),~~\text{and}~~f(x) = \frac{f_1(x)+f_2(x)}{2}
\end{align*}
for some $L>0$ and let $L_1=L/\log 2$, then both functions $f_1$ and $f_2$ is $(0,L_1)$-smooth while the function $f$ is not $(0,L_1)$-smooth.
\end{prop}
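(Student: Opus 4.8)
The plan is to reduce the $(0,L_1)$-smoothness condition to a one-dimensional scalar inequality for the two exponentials, and, for the averaged function, to exploit the vanishing of the gradient at the origin together with the fact that the gradient factor on the right-hand side of Assumption~\ref{asm:sym-1} is evaluated at $x$. Since every function here is univariate, the gradient is just the derivative and $\Norm{\cdot}$ is absolute value.

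For $f_1(x)=\exp(Lx)$ I would fix $x$, write $y=x+t$ with $|t|\le 1/L_1$, and use $f_1'(x)=L\exp(Lx)$. The defining inequality $|f_1'(x)-f_1'(y)|\le L_1|f_1'(x)|\,|x-y|$ becomes, after cancelling the common positive factor $L\exp(Lx)$, a condition that no longer depends on $x$:
\[
|\exp(Lt)-1|\le L_1|t|.
\]
Substituting $L=L_1\log 2$ and $s=L_1 t$ (so that $|s|\le 1$) turns this into the clean inequality $|2^{s}-1|\le|s|$ on $[-1,1]$. I would prove it by splitting cases: on $[0,1]$ the convex function $s\mapsto 2^s-1$ lies below the chord joining $(0,0)$ and $(1,1)$, which is exactly $y=s$, giving $2^s-1\le s$; on $[-1,0]$ the elementary bound $e^u\ge 1+u$ yields $2^s\ge 1+s\log 2\ge 1+s$ (the last step using $s\le 0$ and $\log 2<1$), i.e. $1-2^s\le -s$. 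Thus $f_1$ is $(0,L_1)$-smooth. Since $f_2(x)=\exp(-Lx)=f_1(-x)$ and the scalar inequality is invariant under $s\mapsto -s$, the identical computation shows $f_2$ is $(0,L_1)$-smooth as well.

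For the averaged function $f(x)=\cosh(Lx)$ I would argue by contradiction using the critical point at the origin. Here $f'(x)=L\sinh(Lx)$, so $f'(0)=0$. If $f$ were $(0,L_1)$-smooth, then taking $x=0$ in the definition would force $|f'(0)-f'(y)|\le L_1|f'(0)|\,|y|=0$ for every $y$ with $|y|\le 1/L_1$; but $|f'(0)-f'(y)|=L|\sinh(Ly)|>0$ for all $y\neq 0$. Choosing any such $y$ in $(0,1/L_1]$ gives a contradiction, so $f$ is not $(0,L_1)$-smooth.

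The only genuine computation is the scalar inequality $|2^s-1|\le|s|$, and the point worth highlighting is that it is tight: equality holds at $s=1$, which is precisely why the borderline constant $L_1=L/\log 2$ is the largest admissible one and why the locality restriction $|x-y|\le 1/L_1$ (i.e. $|s|\le 1$) cannot be dropped, since for $s>1$ one has $2^s-1>s$ and even a single exponential fails the bound. By contrast, the non-smoothness of $f$ is essentially free once one observes that $(0,L_1)$-smoothness is incompatible with a strict interior minimum: there the right-hand side vanishes because $\nabla f$ does, while the left-hand side does not.
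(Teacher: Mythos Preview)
Your proof is correct and follows essentially the same route as the paper: reduce the $(0,L_1)$-smoothness of $f_1$ to a scalar inequality after cancelling $L\exp(Lx)$, and disprove it for $f=\cosh(L\cdot)$ by evaluating the right-hand side at the critical point $x=0$. The only cosmetic differences are that the paper phrases the scalar bound via monotonicity of $z\mapsto|1-\exp(Lz)|/(L|z|)$ rather than your convexity/chord argument (these are equivalent), and that it picks the specific witness $y=(\log 2)/L=1/L_1$ where you allow any $y\in(0,1/L_1]$; your observation that $(0,L_1)$-smoothness is incompatible with any nondegenerate interior minimum is a nice conceptual sharpening.
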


\begin{proof}
We can verify that the derivatives of functions $f_1$, $f_2$, and~$f$ have the form of $f'_1(x) = L\exp(Lx)$, $f'_2(x) = -L\exp(-Lx)$, and $f'(x) = (L\exp(Lx)-L\exp(-Lx))/{2}$.
Note that the function
\begin{align*}
    h(z) = \frac{|1-\exp(Lz)|}{L|z|} 
\end{align*}
is increasing.
Taking $z=y-x\neq 0$, we have
\begin{align}\label{eq:example-h}
\begin{split}    
  \frac{|1-\exp(L(y-x))|}{L|y-x|} 
= h(y-x)  
\leq   h\left(\frac{1}{L_1}\right)  
= \frac{|1-\exp(L/L_1)|}{L/L_1}
\end{split}
\end{align}
for all $x,y\in\BR$ with $|y-x|\in[-1/L_1, 1/L_1]$.

For the function $f_1$, we have
\begin{align*}
  &  |f'_1(x) - f'_1(y)|  
=   |L\exp(Lx) - L\exp(Ly)| \\
= & L\exp(Lx)|1-\exp(L(y-x))|  \\
\leq &  L\exp(Lx) \cdot L_1|1-\exp(L/L_1)||y-x|  \\
= & L \exp(Lx) |y-x| L_1 
=   L_1 f_1'(x)|x-y|,
\end{align*}
where the inequality is based on equation (\ref{eq:example-h}) and the last two steps are based on $f'_1(x) = L\exp(Lx)$ and the setting $L_1=L/\log 2$.
Therefore, we have shown that $f_1$ is $(0,L_1)$-smooth. 
Similarly, we can also show that $f_2$ is $(0,L_1)$-smooth. 

Let $x=0$ and $y=(\log 2)/L$, which holds $|x-y|=1/L_1$.
Note that we also have
\begin{align*}
    |f'(x) - f'(y)|  
=& \frac{L}{2}\left|\exp(Lx)\!-\!\exp(-Lx) -(\exp(Ly)\!-\!\exp(-Ly))\right| \\
=& \frac{L}{2}\left|\exp(0)-\exp(0) -\left(2-\frac{1}{2}\right)\right| = \frac{3L}{4}
\end{align*}
and
\begin{align*}
    L_1f'(x)|y-x| = \frac{L_1L(\exp(Lx)-\exp(-Lx))|y-x|}{2} = 0.
\end{align*}
Therefore, the inequality $|f'(x) - f'(y)| \leq L_1 f'(x)|y-x|$ does not hold, which means the function~$f$ is not $(0,L_1)$-smooth.
\end{proof}



\begin{figure*}[ht]
    \centering
    \begin{tabular}{ccc}
    \includegraphics[scale=0.42]{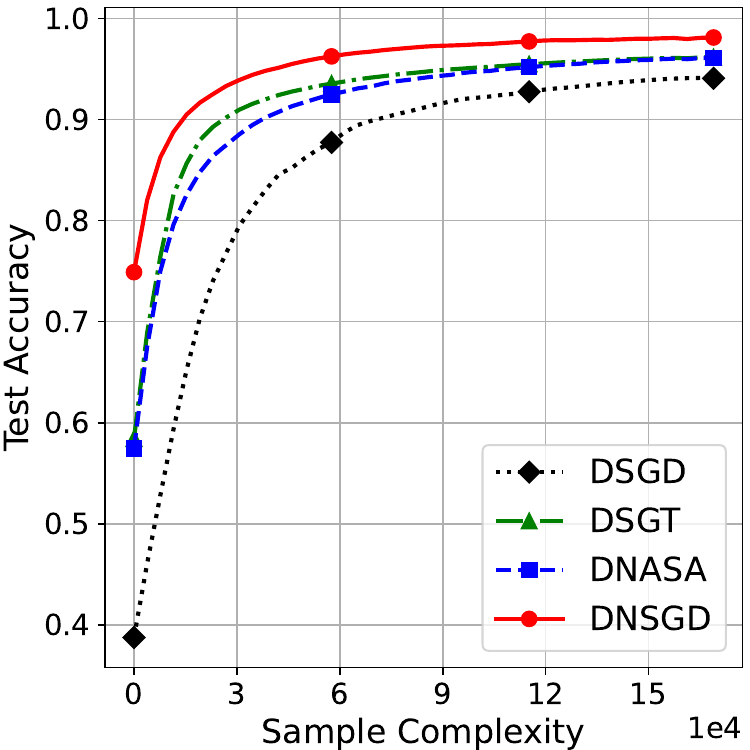} &
    ~\includegraphics[scale=0.42]{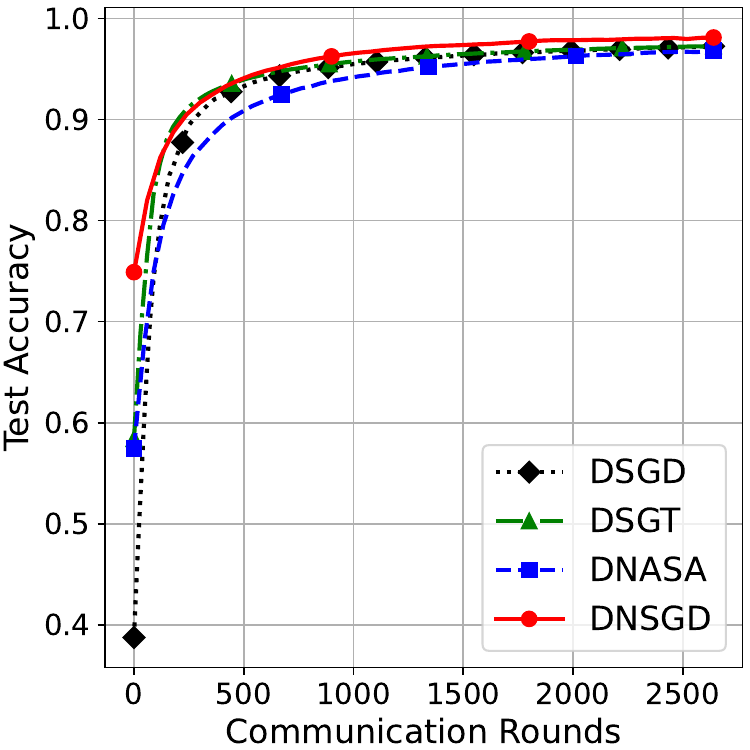}~ & \includegraphics[scale=0.42]{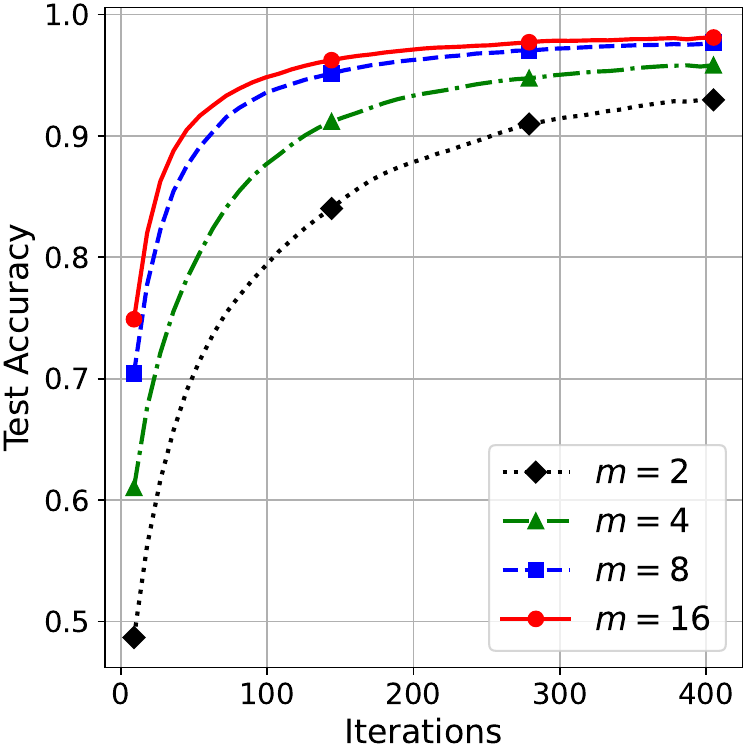} \\
     (a) sample vs. accuracy &
     (b) communication vs. accuracy &
     (c) linear speed-up
    \end{tabular} \vskip-0.1cm
    \caption{We present numerical results on the MNIST dataset over the  Erd{\"o}s--R{\'e}nyi network. The subfigures (a) and (b) present the sample complexity per agent and the communication rounds against the average test accuracy on the network with $m=16$ agents. 
    The subfigure (c) verifies the linear speedup effect of our DNSGD on the network with $m=2,4,8,16$ agents.}
    \label{fig:mnist-renyi} 
\end{figure*}

\begin{figure*}[ht]
    \centering
    \begin{tabular}{ccc}
    \includegraphics[scale=0.42]{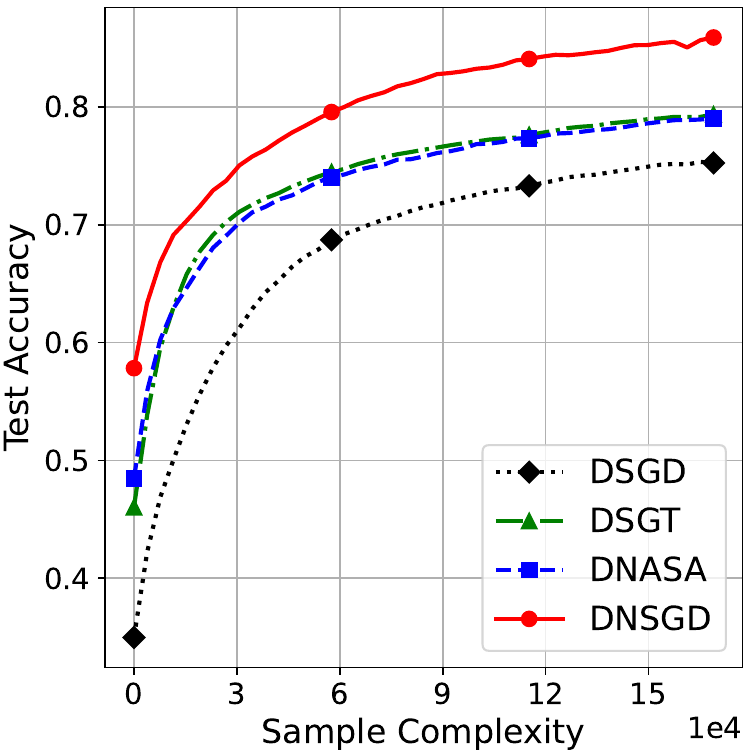} &
    ~\includegraphics[scale=0.42]{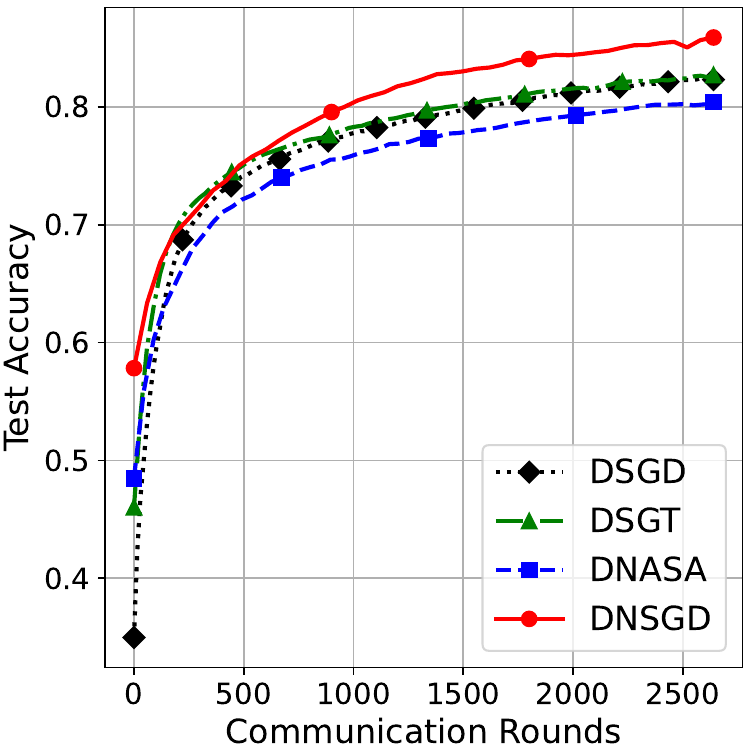}~ & \includegraphics[scale=0.42]{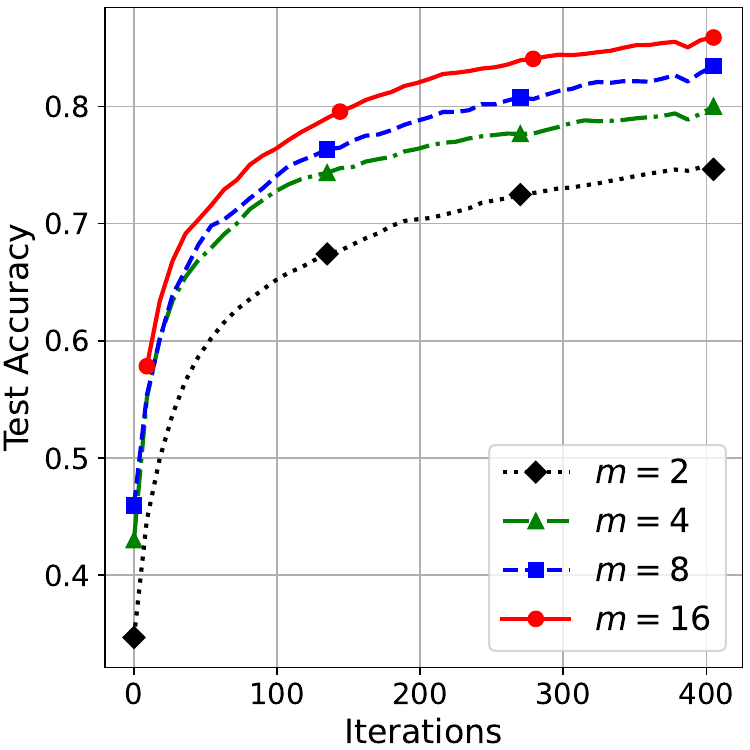} \\
     (a) sample vs. accuracy &
     (b) communication vs. accuracy &
     (c) linear speed-up 
    \end{tabular} \vskip-0.1cm
    \caption{We present numerical results on the Fashion-MNIST dataset over the  Erd{\"o}s-R{\'e}nyi network. The subfigures (a) and (b) present the sample complexity per agent and the communication rounds against the average test accuracy on the network with $m=16$ agents. 
    The subfigure (c) verifies the linear speedup effect of our DNSGD on the network with $m=2,4,8,16$ agents.}
    \label{fig:fashion-renyi}
\end{figure*}

Recall that the discussion after Corollary \ref{cor:main} shows that the sample complexity of our method matches the lower bound in the special case of $L_1=0$ \citep{lu2021optimal}. 
This implies that the $\fO(\epsilon^{-4})$ dependence on accuracy we obtained cannot be improved in our setting.
If we target to achieve the sharper~$\mathcal{O}(\epsilon^{-3})$ dependence in the sample complexity, the assumption of expected relaxed smoothness is necessary \citep{chen2023generalized,reisizadeh2025variance}, e.g., strengthen 
condition (\ref{eq:asm:sym-1}) in Assumption~\ref{asm:sym-1} to
$\BE[\Norm{\nabla F_i(\vx;\vxi_i)-\nabla F_i(\vy;\vxi_i)}^2] \leq \left(L_0 + L_1 \Norm{\nabla f_i(\vx)}\right)^2\Norm{\vx-\vy}^2$,
then applying the variance-reduced gradient estimators \citep{chen2023generalized,reisizadeh2025variance,fang2018spider} at local agents is possible to achieve the sample complexity depends on $\mathcal{O}(\epsilon^{-3})$.

\section{Experiments} \label{sec:exp}

We conduct the experiments on the applications of image classification and fine-tuning the large language model, which are performed on a cluster with 8 virtual GPUs, 8~virtual CPUs, and 96 GB memory.  
We compare our decentralized normalized stochastic gradient descent (DNSGD) with  decentralized stochastic gradient descent (D-SGD) \citep{lian2017can}, decentralized stochastic gradient descent with gradient tracking (D-SGT) \citep{nedic2009distributed,qu2017harnessing} , and decentralized normalized averaged stochastic approximation (D-NASA) \citep{li2024problem}.
The code is available at the link: 
\url{https://anonymous.4open.science/r/DNSGD-F8D6} 

\subsection{Image Classification} \label{sec:exp-image}

We present numerical results of image classification on datasets MNIST and Fashion-MNIST. The MNIST dataset is a well-known collection of handwritten digits (10 classes), containing $70,000$ images, with $60,000$ for training and $10,000$ for testing \citep{lecun2002gradient}. The Fashion-MNIST dataset serves as a more challenging variant, consisting of $70,000$ images of fashion items with 10 classes, such as trouser and dress \citep{xiao2017fashion}. Each example of MNIST or Fashion-MNIST is a 28$\times$28 grayscale image.

\begin{figure*}[t]
    \centering
    \begin{tabular}{ccc}
    \includegraphics[scale=0.42]{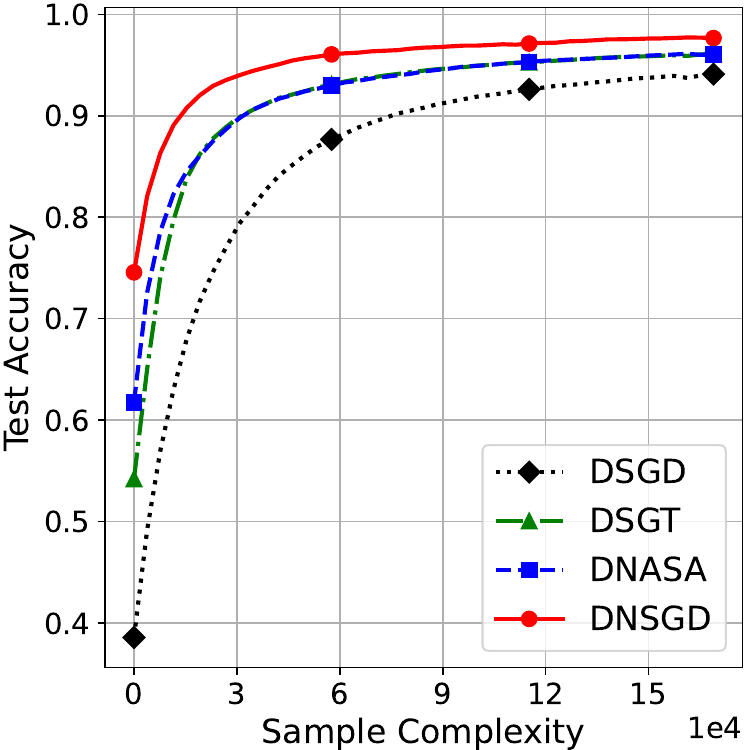} &
    ~\includegraphics[scale=0.42]{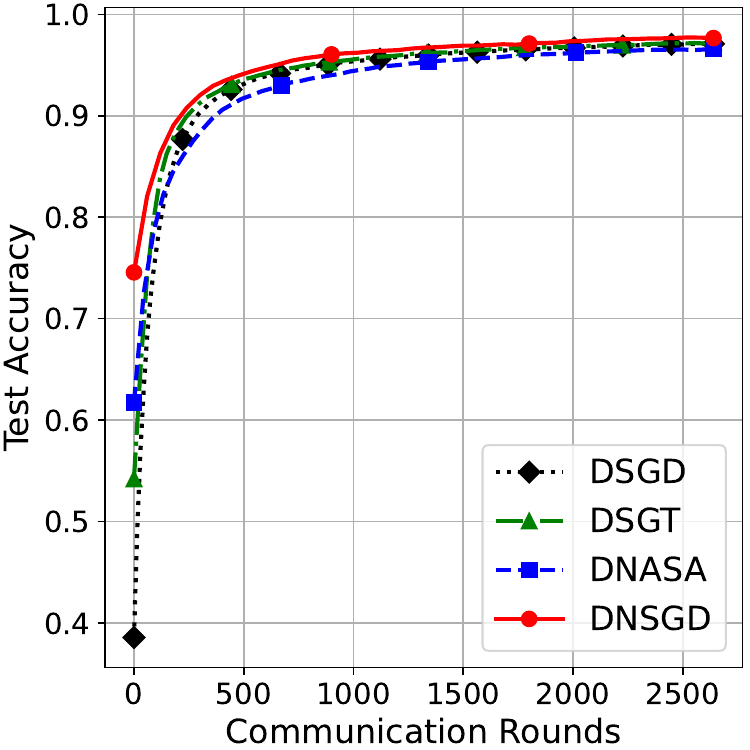}~ & \includegraphics[scale=0.42]{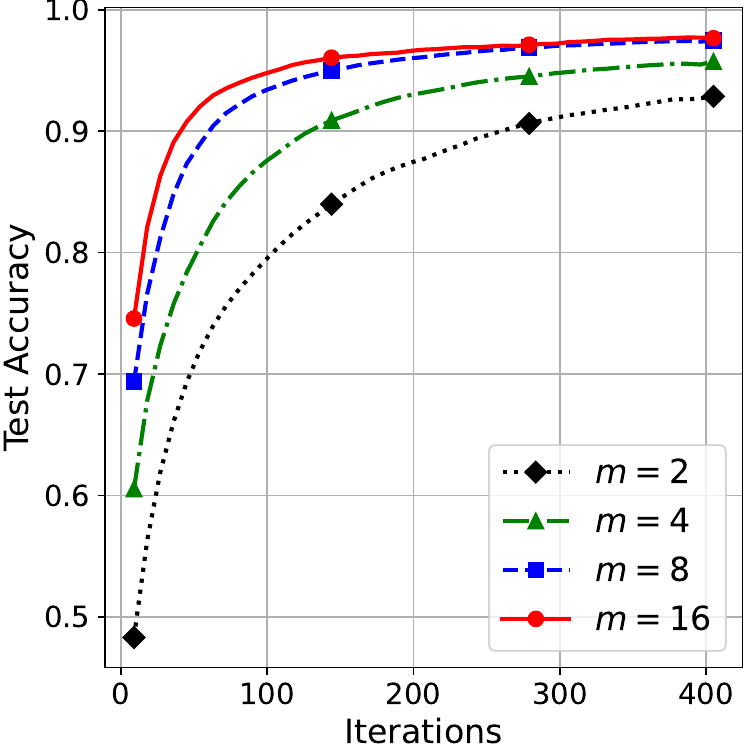} \\
     (a) sample vs. accuracy &
     (b) communication vs. accuracy &
     (c) linear speed-up 
    \end{tabular} \vskip-0.2cm
    \caption{We present numerical results on the MNIST dataset over the  ring network. The subfigures (a) and (b) present the sample complexity per agent and the communication rounds against the average test accuracy on the network with $m=16$ agents. 
    The subfigure (c) verifies the linear speedup effect of our DNSGD on the network with $m=2,4,8,16$ agents.}
    \label{fig:mnist-ring} \vskip-0.15cm
\end{figure*}

\begin{figure*}[t]
    \centering
    \begin{tabular}{ccc}
    \includegraphics[scale=0.42]{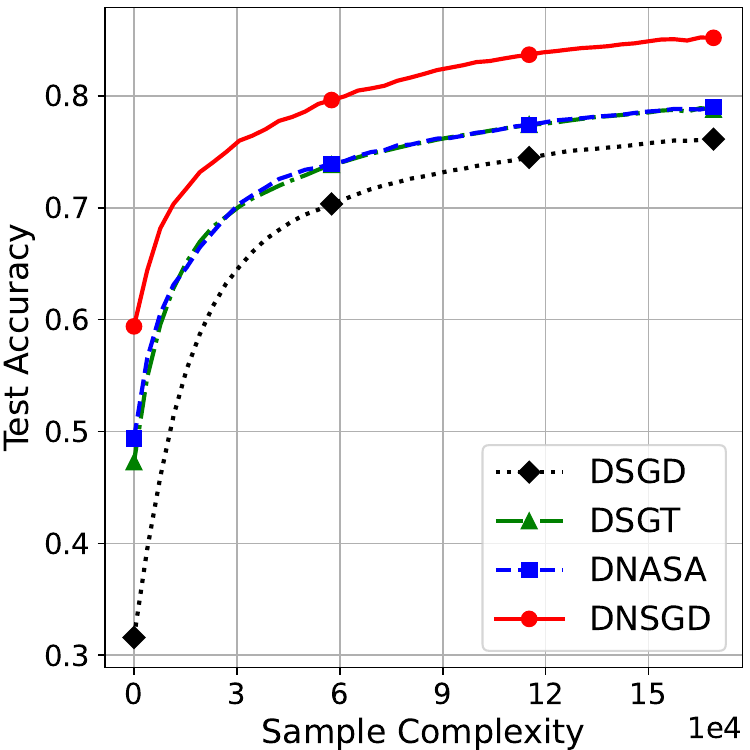} &
    ~\includegraphics[scale=0.42]{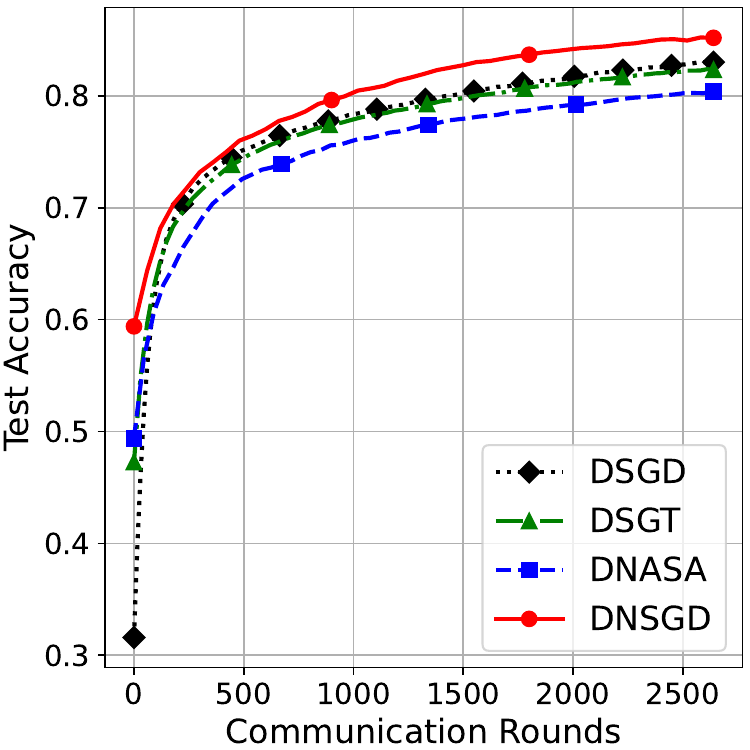}~ & \includegraphics[scale=0.42]{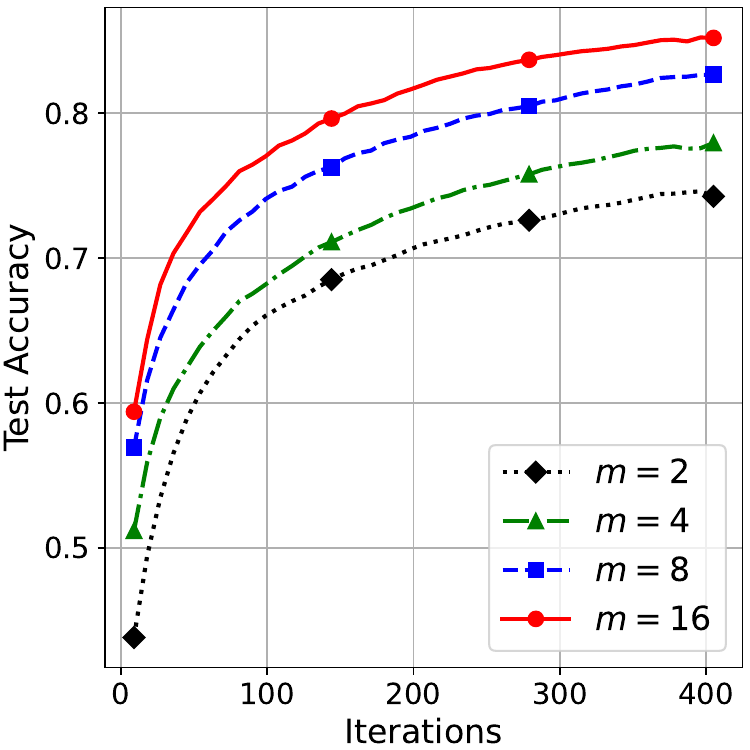} \\
     (a) sample vs. accuracy &
     (b) communication vs. accuracy &
     (c) linear speed-up 
    \end{tabular} \vskip-0.2cm
    \caption{We present numerical results on the Fashion-MNIST dataset over the ring network. The subfigures (a) and (b) present the sample complexity per agent and the communication rounds against the average test accuracy on the network with $m=16$ agents. 
    The subfigure (c) verifies the linear speedup effect of our DNSGD on the network with $m=2,4,8,16$ agents.}
    \label{fig:fashion-ring} \vskip-0.2cm
\end{figure*}

Our model employs a four-layer neural network, consisting of two convolutional layers and two fully connected layers. The first convolutional layer processes the single-channel input image with $32$ kernels of size $3 \times 3$, followed by ReLU activation and $2 \times 2$ max-pooling. The second convolutional layer takes the $32$-channel feature maps from the first layer, applies another $32$ kernels of size $3 \times 3$, and is followed by ReLU activation and $2 \times 2$ max-pooling. To enhance generalization, we apply dropout with a rate of $0.25$ after the convolutional layers. The resulting feature maps are then flattened and passed through a fully connected layer with $128$ units and ReLU activation, followed by a second dropout layer with a rate of $0.5$. Finally, we use a fully connected output layer to generate scores for the $10$ classes.


We perform numerical comparisons on a distributed system with $16$ agents, where the data is evenly distributed among them. 
In our experiments, we employ two network topologies, i.e., the Erd{\"o}s--R{\'e}nyi graph~\citep{erdds1959random} with the connectivity probability of $p = 0.6$ and the ring graph. 
For all algorithms, we set batch size to be $b = 250$. 
The step size of D-SGD, D-SGT and DNSGD are chosen from $\{0.0001,0.0005,0.001,0.005,0.01,0.05,0.1,0.5\}$. The learning rate of the D-NASA algorithm is given by $\eta_t=m^{1/4}t^{3/4}$ which follows the theory and implementation of \citet{li2024problem}, where $t$ is the iteration counter and $m$ is the number of agents. 
For our DNSGD, we set the iterations number of Chebyshev acceleration (Algorithm~\ref{alg:fm}) to~$\hat K = K = 2$.

We present the numerical results on the  Erd{\"o}s-R{\'e}nyi graph in Figures \ref{fig:mnist-renyi} and \ref{fig:fashion-renyi}, and the results on the ring graph are shown in Figures \ref{fig:mnist-ring} and~\ref{fig:fashion-ring}. 
We evaluation the performance by showing the sample complexity and the communication rounds against the average test accuracy, i.e., the mean of the test accuracies achieved by the models from all agents.   
We can observe that DNSGD outperforms all baseline methods. 
The linear speed-up of our method is verified by comparing the results on the networks with $m=2,4,8,16$ agents.
Figures \ref{fig:mnist-renyi}(c), \ref{fig:fashion-renyi}(c), \ref{fig:mnist-ring}(c), and \ref{fig:fashion-ring}(c) show that the more agents do lead to the faster convergence.

\begin{figure*}[t]
    \centering
    \begin{tabular}{cc}
    \includegraphics[scale=0.5]{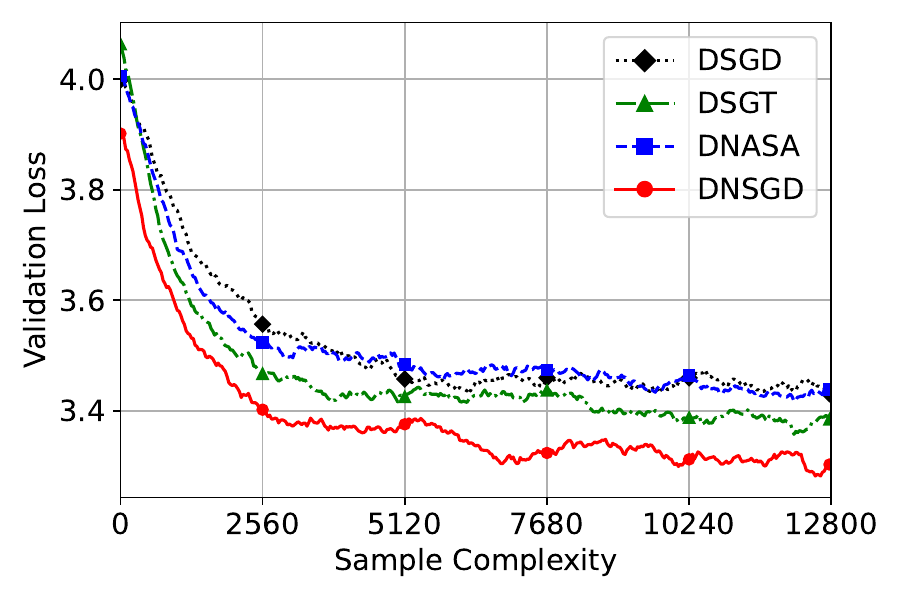} &
    ~\includegraphics[scale=0.5]{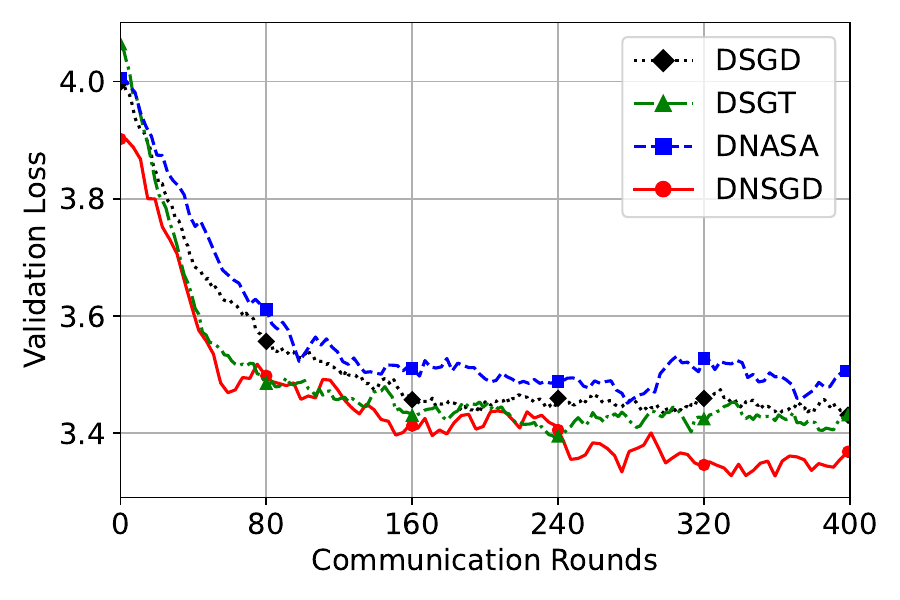}~ \\
      (a) sample vs. validation loss &
      (b) communication vs. validation loss
    \end{tabular} \vskip-0.15cm
    \caption{We present numerical results for fine-tuning nanoGPT on the shakespeare dataset over the Erd{\"o}s-R{\'e}nyi network with $m=4$ agents. The subfigures (a) and (b) present the sample complexity per agent and the communication rounds against the validation loss.}
    \label{fig:shakespeare-renyi}  \vskip-0.15cm
\end{figure*}

\begin{figure*}[t]
    \centering
    \begin{tabular}{cc}
    \includegraphics[scale=0.5]{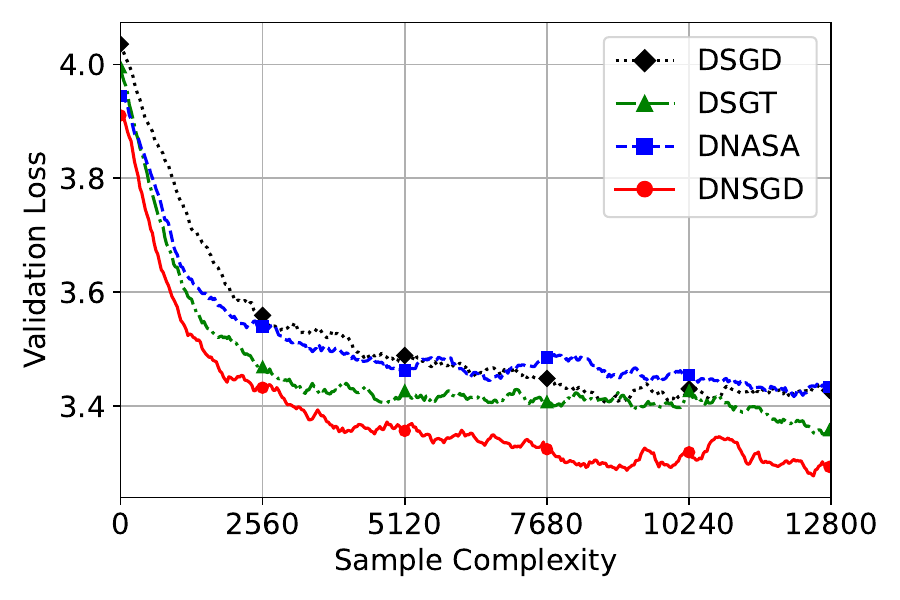} &
    ~\includegraphics[scale=0.5]{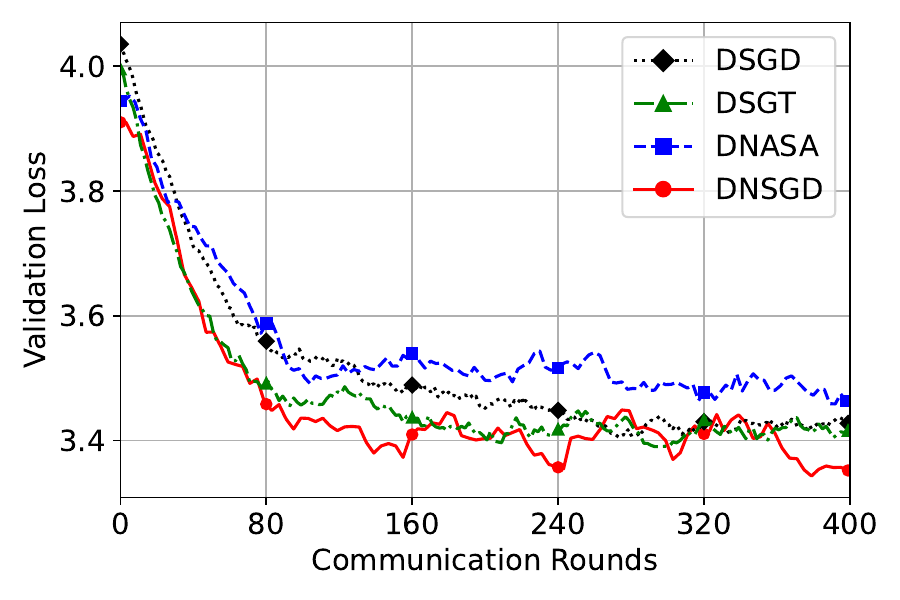}~ \\
      (a) sample vs. validation loss &
      (b) communication vs. validation loss
    \end{tabular} \vskip-0.15cm
    \caption{We present numerical results for fine-tuning nanoGPT on the shakespeare dataset over the ring network with $m=4$ agents. The subfigures (a) and (b) present the sample complexity per agent and the communication rounds against the validation loss.}
    \label{fig:shakespeare-ring}  \vskip-0.15cm
\end{figure*}

\subsection{Fine-Tuning the Large
Language Model}
We provide numerical results for
fine-tuning large language models. Specifically, we fine-tune nanoGPT ($124$M parameter)\footnote{https://github.com/karpathy/nanoGPT} on the Shakespeare dataset \cite{caldas2018leaf}\footnote{https://huggingface.co/datasets/flwrlabs/shakespeare}.

Similar to the empirical studies in Section~\ref{sec:exp-image},
we perform our experiments on two network topologies with 4 agents, i.e., the Erd{\"o}s--R{\'e}nyi graph~\citep{erdds1959random} with the connectivity probability of $p = 0.6$ and the ring graph. 
For all algorithms, we update the parameters of the model every $4$ micro-steps with batch-size $b=2$. 
The step sizes of D-SGD, D-SGT and DNSGD are chosen from $\{0.0001, 0.0005, 0.001, 0.005, 0.01, 0.05, 0.1, 0.5\}$. 
The step sizes of the D-NASA algorithm is given by $\eta_t = m^{1/4}t^{3/4}$ which follows the theory and implementation of \citet{li2024problem}, where $t$ is the iteration counter and $m$ is the number of agents. 
For our DNSGD, we set the iterations number of Chebyshev acceleration (Algorithm \ref{alg:fm}) to be~$\hat K = K = 2$.

We present the numerical results on the  Erd{\"o}s--R{\'e}nyi graph in Figure \ref{fig:shakespeare-renyi}, and the results on the ring graph are shown in Figure~\ref{fig:shakespeare-ring}. 
We evaluate the performance of the algorithms by showing the sample complexity and the communication rounds against the loss on the validation set. 
We can observe that our DNSGD outperforms all baseline methods with respect to both the measures of the computation and the computation.

\section{Conclusion}

This paper proposes decentralized normalized stochastic gradient
descent for nonconvex optimization under the relaxed smoothness. 
We provide theoretical analysis to show our method can achieve the $\epsilon$-stationary point on each agent.
In particular, we design a new Lyapunov function including the product of the gradient norm and the consensus error for convergence analysis, which successfully address the non-Lipschitz and unbounded local gradients in relaxed smooth decentralized optimization.
The empirical studies also show that our method performs better than baselines.

In future work, we would like to establish decentralized methods for convex optimization under the relaxed smoothness \citep{chezhegov2025convergence,gaash2025convergence,gorbunov2024methods,koloskova2023revisiting,lobanov2024linear,vankov2024optimizing} and study the possibility to use the momentum to improve the performance of the algorithm in practice \cite{cutkosky2020momentum}.
It is also interesting to extend our results to the more general settings such as the symmetric relaxed smoothness \citep{chen2023generalized} and the stochastic gradient estimator with heavy-tailed noise \citep{liu2024nonconvex}.

\begin{acks}
We would like to thank Yuxin Liu for the helpful discussion. 
This work is supported by the Major Key Project of Pengcheng Laboratory (No. PCL2024A06), the National Natural Science Foundation of China (No.12571557), the National Natural Science Foundation of China (No.62306116), Shanghai Basic Research Program (23JC1401000), and the Shanghai Special Fund for Promoting High-Quality Industrial Development (No. 20250307).
The computations in this research were performed using the CFFF platform of Fudan University.
\end{acks}

\bibliographystyle{ACM-Reference-Format}
\bibliography{sample-base}

\clearpage
\appendix

\section{Some Technical Lemmas}

We first provide some technical lemmas for later proofs.

\begin{lem}[{\citet[Lemma 12]{ye2023multi}}]\label{lem:vector-mean}
For given matrix 
\begin{align*}
\mG=[\vg_1;\dots;\vg_m]\in\BR^{m\times d},     
\end{align*}
we have~$\Norm{\mG - \vone\bg}^2 \leq \Norm{\mG}^2$, where $\bg=\frac{1}{m}\sum_{i=1}^m \vg_i$.
\end{lem}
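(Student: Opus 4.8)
The plan is to recognize that forming $\mG-\vone\bg$ amounts to applying an orthogonal projection to $\mG$ in the Frobenius inner product, and that an orthogonal projection never increases the Frobenius norm. The only computation needed is the Pythagorean splitting of $\Norm{\mG}^2$ into the squared norms of its two orthogonal components.

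First I would rewrite the subtracted term using the mean as a projection. Since $\bg=\frac{1}{m}\vone^\top\mG$ is the $1\times d$ row-mean, we have $\vone\bg=\frac{1}{m}\vone\vone^\top\mG$, so that $\mG-\vone\bg=(\mI-\mP)\mG$ with $\mP\coloneqq\frac{1}{m}\vone\vone^\top\in\BR^{m\times m}$. I would then verify that $\mP$ is an orthogonal projection: it is symmetric, and $\mP^2=\frac{1}{m^2}\vone(\vone^\top\vone)\vone^\top=\frac{1}{m}\vone\vone^\top=\mP$ because $\vone^\top\vone=m$. Consequently $\mI-\mP$ is also a symmetric idempotent.

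Next I would decompose $\mG=\mP\mG+(\mI-\mP)\mG$ and apply the Pythagorean identity for the Frobenius inner product $\inner{\mA}{\mB}=\Tr(\mA^\top\mB)$. The cross term vanishes, $\inner{\mP\mG}{(\mI-\mP)\mG}=\Tr\big(\mG^\top\mP^\top(\mI-\mP)\mG\big)=\Tr\big(\mG^\top(\mP-\mP^2)\mG\big)=0$, using $\mP^\top=\mP$ and $\mP^2=\mP$. Hence $\Norm{\mG}^2=\Norm{\mP\mG}^2+\Norm{(\mI-\mP)\mG}^2\ge\Norm{(\mI-\mP)\mG}^2=\Norm{\mG-\vone\bg}^2$, which is exactly the claimed inequality. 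As a quick sanity check I would also note the purely column-wise route: writing the columns of $\mG$ as $\vc_1,\dots,\vc_d\in\BR^m$, the corresponding column of $\mG-\vone\bg$ is $\vc_j-\bar c_j\vone$ with $\bar c_j=\frac{1}{m}\vone^\top\vc_j$, and a direct expansion gives $\Norm{\vc_j-\bar c_j\vone}^2=\Norm{\vc_j}^2-m\bar c_j^2\le\Norm{\vc_j}^2$; summing over $j$ reproduces the Frobenius-norm bound.

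There is essentially no hard step here, since the statement is a standard norm-nonincrease property of orthogonal projections; the only point requiring care is setting up $\mP$ correctly and confirming the orthogonality (vanishing cross term) of $\mP\mG$ and $(\mI-\mP)\mG$, after which the inequality is immediate.
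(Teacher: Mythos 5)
Your proof is correct: the paper states this lemma by citation to \citet[Lemma 12]{ye2023multi} without reproducing a proof, and your argument is precisely the standard one behind that cited result, namely that $\mG-\vone\bg=(\mI-\tfrac{1}{m}\vone\vone^\top)\mG$ is the image of $\mG$ under a symmetric idempotent, so the Pythagorean identity $\Norm{\mG}^2=\Norm{\tfrac{1}{m}\vone\vone^\top\mG}^2+\Norm{\mG-\vone\bg}^2$ gives the inequality (your column-wise check $\Norm{\vc_j-\bar c_j\vone}^2=\Norm{\vc_j}^2-m\bar c_j^2$ is the same identity written coordinatewise). No gaps; nothing further is needed.
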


\begin{lem}[{\citet[Lemma B.5]{li2024problem}}]
For given vectors 
\begin{align*}
\vv_1,\dots,\vv_m\in\BR^{1\times d} \quad\text{and}\quad \bv=\frac{1}{m}\sum_{i=1}^m\vv_i,     
\end{align*}
we have    
\begin{align}\label{eq:consensus-error-1}
    \Norm{\frac{1}{m}\sum_{i=1}^m \frac{\vv_i}{\Norm{\vv_i}} - \frac{\bv}{\Norm{\bv}}} 
\leq \frac{1}{m}\sum_{i=1}^m \frac{\Norm{\bv - \vv_i}}{\Norm{\bv}}. 
\end{align}
\end{lem}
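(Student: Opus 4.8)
The plan is to prove (\ref{eq:consensus-error-1}) by exploiting that $\bv$ is \emph{exactly} the mean of the $\vv_i$, which forces a purely linear term to cancel after averaging. The first step I would take is to rewrite each summand by inserting the intermediate vector $\vv_i/\Norm{\bv}$, giving the algebraic identity
\begin{align*}
\frac{\vv_i}{\Norm{\vv_i}} - \frac{\bv}{\Norm{\bv}}
= \vv_i\left(\frac{1}{\Norm{\vv_i}} - \frac{1}{\Norm{\bv}}\right) + \frac{\vv_i - \bv}{\Norm{\bv}}.
\end{align*}
The point of this split is that the second term is linear in $\vv_i$, so it will average out, whereas the first term is already manifestly controlled by the difference of the norms.

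Averaging over $i\in[m]$ and using $\frac{1}{m}\sum_{i=1}^m(\vv_i-\bv)=\vzero$ (which holds precisely because $\bv=\frac{1}{m}\sum_{i=1}^m\vv_i$), the second term vanishes and leaves
\begin{align*}
\frac{1}{m}\sum_{i=1}^m\frac{\vv_i}{\Norm{\vv_i}} - \frac{\bv}{\Norm{\bv}}
= \frac{1}{m}\sum_{i=1}^m \vv_i\left(\frac{1}{\Norm{\vv_i}} - \frac{1}{\Norm{\bv}}\right).
\end{align*}
From here I would take norms, apply the triangle inequality, and use the scalar identity $\Norm{\vv_i}\,\big|\Norm{\vv_i}^{-1}-\Norm{\bv}^{-1}\big| = \big|\Norm{\bv}-\Norm{\vv_i}\big|/\Norm{\bv}$ to reach the bound $\frac{1}{m}\sum_{i=1}^m \big|\Norm{\bv}-\Norm{\vv_i}\big|/\Norm{\bv}$. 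Finally, the reverse triangle inequality $\big|\Norm{\bv}-\Norm{\vv_i}\big|\le\Norm{\bv-\vv_i}$ gives exactly the right-hand side of (\ref{eq:consensus-error-1}).

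The main obstacle is recognizing that the naive route is \emph{false}: bounding each summand individually via $\Norm{\vv_i/\Norm{\vv_i} - \bv/\Norm{\bv}}\le \Norm{\vv_i-\bv}/\Norm{\bv}$ before averaging does not hold in general (take $\bv$ of large norm and some $\vv_i$ small and nearly antiparallel, so the left side approaches $2$ while the right side stays near $1$). The averaged statement survives only because the deviations $\vv_i-\bv$ sum to zero, which is exactly what annihilates the offending linear contribution. Thus the crux is to isolate that linear term through the insertion above and invoke the mean-zero cancellation \emph{before} applying any triangle inequality; once the cancellation is exploited, only the elementary scalar triangle and reverse-triangle inequalities remain, and no constraint on the individual $\vv_i$ (beyond nonvanishing norms so the normalizations are defined) is needed.
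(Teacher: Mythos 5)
Your proof is correct and is essentially the paper's own argument: inserting $\vv_i/\Norm{\bv}$ and invoking the mean-zero cancellation yields exactly the identity $\frac{1}{m}\sum_{i=1}^m \frac{\vv_i}{\Norm{\vv_i}} - \frac{\bv}{\Norm{\bv}} = \frac{1}{m}\sum_{i=1}^m \frac{(\Norm{\bv}-\Norm{\vv_i})\vv_i}{\Norm{\vv_i}\Norm{\bv}}$ with which the paper's proof opens, after which both arguments proceed identically via the triangle inequality, cancellation of $\Norm{\vv_i}$, and the reverse triangle inequality. Your observation that the naive per-term bound fails and that the mean property is what rescues the averaged statement is accurate, but it does not change the route.
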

\begin{proof}
This result is presented in the analysis \citet[Lemma B.5]{li2024problem}. 
Here, we provide the detailed proof for completeness. We have
\begin{align*}
& {\Norm{\frac{1}{m}\sum_{i=1}^m \frac{\vv_i}{\Norm{\vv_i}} - \frac{\bv}{\Norm{\bv}}} 
= \Norm{\frac{1}{m}\sum_{i=1}^m \frac{\vv_i}{||\vv_i||} - \frac{1}{m}\sum_{i=1}^m\frac{\vv_i}{||\bar \vv||}}} \\
{=} & {\Norm{\frac{1}{m}\sum_{i=1}^m \frac{(\Norm{\bv} - \Norm{\vv_i})\vv_i}{\Norm{\vv_i}\Norm{\bv}}}}  
\leq  \frac{1}{m}\sum_{i=1}^m\Norm{ \frac{(\Norm{\bv} - \Norm{\vv_i})\vv_i}{\Norm{\vv_i}\Norm{\bv}}}  \\
\leq & \frac{1}{m}\sum_{i=1}^m \frac{\left|\Norm{\bv} - \Norm{\bv_i}\right|\,\Norm{\vv_i}}{\Norm{\vv_i}\Norm{\bv}}  
= \frac{1}{m}\sum_{i=1}^m \frac{\left|\Norm{\bv} - \Norm{\vv_i}\right|}{\Norm{\bv}} \\
\leq & \frac{1}{m}\sum_{i=1}^m \frac{\Norm{\bv - \vv_i}}{\Norm{\bv}},
\end{align*}
where {the first equality is based on the fact $\bar\vv=\frac{1}{m}\sum_{i=1}^m \vv_i$}; the first inequality is based on vector AM-QM inequality; the second inequality is based on Cauchy--Schwarz inequality; the last step is based on the triangle inequality.
\end{proof}

\begin{lem}
The update of Algorithm \ref{alg:DNSGD}  implies $\bv^t = \bg^t$.   
\end{lem}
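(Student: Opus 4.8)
The plan is to argue by induction on $t$, using as the single key ingredient the mean-preservation property of the accelerated gossip subroutine $\AG$. Recall from Proposition~\ref{proposition_fastmix} that for any input matrix $\mY^{(0)}$ the output satisfies $\frac{1}{m}\vone^\top\mY^{(K)} = \frac{1}{m}\vone^\top\mY^{(0)}$; that is, applying $\AG$ leaves the row-average unchanged, regardless of the number of communication rounds. This is exactly what we need, since both $\mV^0$ and each $\mV^{t+1}$ in Algorithm~\ref{alg:DNSGD} are produced by a call to $\AG$.

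For the base case, I would look at line~\ref{line:V0}, where $\mV^0 = \AG(\mG^0,\mW,\hat K)$. Taking the row-average and invoking the mean-preservation property gives $\bv^0 = \frac{1}{m}\vone^\top\mV^0 = \frac{1}{m}\vone^\top\mG^0 = \bg^0$, establishing $\bv^0=\bg^0$.

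For the inductive step, I would assume $\bv^t=\bg^t$ and inspect the tracking update in line~\ref{line:Vt1}, namely $\mV^{t+1} = \AG(\mV^t + \mG^{t+1} - \mG^t,\mW,K)$. Again applying the mean-preservation property and then linearity of the row-average yields
\begin{align*}
\bv^{t+1} = \frac{1}{m}\vone^\top\big(\mV^t + \mG^{t+1} - \mG^t\big) = \bv^t + \bg^{t+1} - \bg^t.
\end{align*}
Substituting the inductive hypothesis $\bv^t=\bg^t$ collapses the right-hand side to $\bg^{t+1}$, which closes the induction and proves $\bv^t=\bg^t$ for all $t$.

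There is no real obstacle here: the entire argument rests on the fact that the accelerated gossip step in Algorithm~\ref{alg:fm} conserves the average (a property already handed to us by Proposition~\ref{proposition_fastmix}), together with the telescoping structure built into the gradient-tracking recursion. The only point deserving a line of care is to confirm that the base case and the recursive case are handled by the \emph{same} conservation property applied to two different $\AG$ calls, so that the induction is uniform.
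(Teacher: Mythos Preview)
Your proposal is correct and matches the paper's own proof essentially line for line: both argue by induction on $t$, invoke the mean-preservation property of $\AG$ from Proposition~\ref{proposition_fastmix} for the base case $\bv^0=\bg^0$ and again for the recursion $\bv^{t+1}=\bv^t+\bg^{t+1}-\bg^t$, and then substitute the inductive hypothesis to conclude.
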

\begin{proof}
This result can be obtained by following \citet[Lemma 2]{luo2022complexity}. Here, we provide the detailed proof for completeness.
The update in line \ref{line:V0} and Proposition \ref{proposition_fastmix} imply $\bv^0=\bg^0$. Suppose it holds $\bv^t=\bg^t$, then the update in line \ref{line:Vt1} means
\begin{align*}
    \bv^{t+1} = \frac{1}{m}\sum_{i=1}^m (\vv_i^t + \vg_i^{t+1} - \vg_i^t) = \bv^t + \bg^{t+1} - \bg^t = \bg^{t+1}.
\end{align*}
Therefore, we finish the proof by induction.
\end{proof}

\begin{lem}[{\citet[Appendix A.1]{zhang2020improved}}]\label{lem:sym-smooth}
Under Assumptions \ref{asm:sym-1} and \ref{asm:heterogeneous}, we have
\begin{align}\label{eq:smooth-descent}
    f(\vy) \leq f(\vx) + \inner{\nabla f(\vx)}{\vy-\vx} + \frac{L_f+L_1\Norm{\nabla f(\vx)}}{2}\Norm{\vy-\vx}^2
\end{align}
for all $\vx,\vy\in\BR^d$ with $\Norm{\vy-\vx}\leq 1/L_1$.
\end{lem}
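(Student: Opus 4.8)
The plan is to derive the quadratic upper bound \eqref{eq:smooth-descent} from the global relaxed smoothness of $f$ already established in Proposition~\ref{prop:global-l0l1}, using the standard integral (fundamental theorem of calculus) representation of the first-order Taylor remainder. This is the classical route from a gradient-Lipschitz-type estimate to a descent inequality, with the one twist that the Lipschitz-type bound here is the relaxed-smoothness inequality \eqref{eq:global-relax-smooth} rather than a uniform one.

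First I would fix $\vx, \vy \in \BR^d$ with $\Norm{\vy - \vx} \leq 1/L_1$ and write
\begin{align*}
f(\vy) - f(\vx) - \inner{\nabla f(\vx)}{\vy - \vx} = \int_0^1 \inner{\nabla f(\vx + t(\vy - \vx)) - \nabla f(\vx)}{\vy - \vx}\, dt,
\end{align*}
which is valid since $f$ is continuously differentiable (each $f_i$ is differentiable by Assumption~\ref{asm:sym-1}). Applying the Cauchy--Schwarz inequality to the integrand bounds it above by $\Norm{\nabla f(\vx + t(\vy - \vx)) - \nabla f(\vx)}\,\Norm{\vy - \vx}$.

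The key step is to control the gradient difference along the segment using Proposition~\ref{prop:global-l0l1}. Writing $\vx_t = \vx + t(\vy - \vx)$, I observe that $\Norm{\vx_t - \vx} = t\Norm{\vy - \vx} \leq \Norm{\vy - \vx} \leq 1/L_1$ for every $t \in [0,1]$, so the domain restriction of \eqref{eq:global-relax-smooth} is met at each point of the segment. The essential subtlety is that \eqref{eq:global-relax-smooth} is \emph{not} symmetric in its two arguments: the gradient norm on the right-hand side is anchored at the first argument. I would therefore apply it with base point $\vx$ (not $\vx_t$), yielding
\begin{align*}
\Norm{\nabla f(\vx_t) - \nabla f(\vx)} \leq \left(L_f + L_1\Norm{\nabla f(\vx)}\right)\Norm{\vx_t - \vx} = \left(L_f + L_1\Norm{\nabla f(\vx)}\right) t\,\Norm{\vy - \vx}.
\end{align*}

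Finally, substituting this bound back into the integral and using $\int_0^1 t\, dt = 1/2$ gives exactly
\begin{align*}
f(\vy) - f(\vx) - \inner{\nabla f(\vx)}{\vy - \vx} \leq \frac{L_f + L_1\Norm{\nabla f(\vx)}}{2}\Norm{\vy - \vx}^2,
\end{align*}
which is the assertion. The only point that requires genuine care — and the main potential pitfall — is anchoring the relaxed-smoothness inequality at $\vx$ rather than at $\vx_t$; anchoring at the wrong endpoint would leave the factor $\Norm{\nabla f(\vx_t)}$ on the right, which does not factor out of the integral and would not reproduce the clean coefficient $\tfrac{1}{2}\bigl(L_f + L_1\Norm{\nabla f(\vx)}\bigr)$. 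Everything else reduces to a routine invocation of the fundamental theorem of calculus and Cauchy--Schwarz.
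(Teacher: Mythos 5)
Your proof is correct and takes essentially the same route as the paper: the paper likewise first invokes Proposition~\ref{prop:global-l0l1} to conclude that $f$ is $(L_f,L_1)$-smooth and then cites equation~(29) of \citet[Appendix A.1]{zhang2020improved}, whose derivation is exactly the integral-remainder argument you carry out, with the relaxed-smoothness bound correctly anchored at $\vx$ so that the factor $L_f + L_1\Norm{\nabla f(\vx)}$ pulls out of the integral. The only difference is that you write out in full the standard computation the paper delegates to the citation.
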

\begin{proof}
Proposition \ref{prop:global-l0l1} means the objective $f$ is $(L_f, L_1)$-smooth, 
then applying equation (29) of \citet[Appendix A.1]{zhang2020improved} directly achieves equation (\ref{eq:smooth-descent}).
\end{proof}

\section{The Proof of Proposition \ref{prop:global-l0l1}}\label{sec:proof-global-l0l1}
\begin{proof}    
For all $\vx,\vy\in\BR^d$ with $\Norm{\vx-\vy}\leq 1/L_1$, we have
\begin{align*}
   & \Norm{\nabla f(\vx) - \nabla f(\vy)}^2 \\
=  & \Norm{\frac{1}{m}\sum_{i=1}^m\big(\nabla f_i(\vx) - \nabla f_i(\vy)\big)}^2 \\
\leq & \frac{1}{m}\sum_{i=1}^m\Norm{\nabla f_i(\vx) - \nabla f_i(\vy)}^2 \\
\leq & \frac{1}{m}\sum_{i=1}^m (L_0 + L_1\Norm{\nabla f_i(\vx)})^2\Norm{\vx-\vy}^2 \\
\leq & (L_0 + L_1(\Norm{\nabla f_i(\vx)-\nabla f(\vx)} + \Norm{\nabla f(\vx)}))^2\Norm{\vx-\vy}^2 \\
= &  (L_0 +L_1\zeta + L_1\Norm{\nabla f(\vx)})^2\Norm{\vx-\vy}^2,
\end{align*}
where the first inequality is based on the Young's inequality; the second inequality is based on Assumption \ref{asm:sym-1}; the third inequality is based on Assumption \ref{asm:heterogeneous}.
Therefore, the function $f$ is $(L_f,L_1)$-smooth with $L_f=L_0+L_1\zeta$.
\end{proof}

\section{The Proof of Lemma \ref{lem:descent-mean-2}}\label{sec:desent-proof}

We first provide the upper bound for the difference of two mean vectors $\bx^{t+1}$ and $\bx^t$.
\begin{lem}\label{lem:diff-bar-x}
Under the setting of Theorem \ref{thm:main}, we have
\begin{align*}
\Norm{\bx^{t+1} - \bx^t}^2 \leq  \eta^2 \leq \frac{1}{4L_1^2}. 
\end{align*}
\end{lem}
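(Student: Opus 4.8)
The plan is to bound $\Norm{\bx^{t+1}-\bx^t}^2$ by exploiting the fact that the averaging operator $\AG(\cdot,\mW,K)$ preserves the mean exactly, as guaranteed by Proposition~\ref{proposition_fastmix}. First I would observe that line~\ref{line:update-X} of Algorithm~\ref{alg:DNSGD} sets $\mX^{t+1}=\AG(\mX^t-\eta\mU^t,\mW,K)$, so taking the row-mean of both sides and using the mean-preservation property $\frac{1}{m}\vone^\top\AG(\mY^{(0)},\mW,K)=\bar\vy^{(0)}$ from Proposition~\ref{proposition_fastmix}, I get
\begin{align*}
\bx^{t+1} = \frac{1}{m}\vone^\top(\mX^t-\eta\mU^t) = \bx^t - \frac{\eta}{m}\vone^\top\mU^t.
\end{align*}
Hence $\bx^{t+1}-\bx^t = -\frac{\eta}{m}\vone^\top\mU^t = -\frac{\eta}{m}\sum_{i=1}^m \vu_i^t$, where $\vu_i^t = \vv_i^t/\Norm{\vv_i^t}$ is the normalized tracking direction from line~\ref{line:U}.

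The key structural fact is that each $\vu_i^t$ is a \emph{unit} row vector by construction, so $\Norm{\vu_i^t}=1$. I would then apply the triangle inequality (or equivalently the vector AM--QM inequality as used in the excerpt) to bound
\begin{align*}
\Norm{\bx^{t+1}-\bx^t} = \frac{\eta}{m}\Norm{\sum_{i=1}^m \vu_i^t} \leq \frac{\eta}{m}\sum_{i=1}^m \Norm{\vu_i^t} = \frac{\eta}{m}\cdot m = \eta,
\end{align*}
which immediately gives $\Norm{\bx^{t+1}-\bx^t}^2 \leq \eta^2$.

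For the second inequality $\eta^2\leq 1/(4L_1^2)$, I would simply invoke the step-size choice in Theorem~\ref{thm:main}, namely $\eta=\min\{\epsilon/(4L_f+1),\,1/(2L_1)\}$, which forces $\eta\leq 1/(2L_1)$ and therefore $\eta^2\leq 1/(4L_1^2)$ (with the convention $1/L_1=+\infty$ when $L_1=0$, so the bound holds trivially in that degenerate case). Chaining the two bounds yields $\Norm{\bx^{t+1}-\bx^t}^2\leq\eta^2\leq 1/(4L_1^2)$, as claimed.

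There is no real obstacle here; the lemma is essentially a one-line consequence of normalization plus mean-preservation. The only point requiring mild care is ensuring that the $\AG$ step does not perturb the mean — which is precisely the first conclusion of Proposition~\ref{proposition_fastmix} — and that the normalization in line~\ref{line:U} is well-defined, i.e.\ $\Norm{\vv_i^t}\neq 0$; I would note this holds for the direction to be defined in the first place and does not affect the norm bound. The purpose of this lemma, worth flagging, is that the bound $\Norm{\bx^{t+1}-\bx^t}\leq\eta\leq 1/(2L_1)\leq 1/L_1$ certifies the iterates stay within the region where the relaxed-smoothness descent inequality~(\ref{eq:smooth-descent}) of Lemma~\ref{lem:sym-smooth} is applicable, which is exactly what the subsequent proof of Lemma~\ref{lem:descent-mean-2} needs.
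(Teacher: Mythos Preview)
Your proposal is correct and follows essentially the same route as the paper: mean-preservation of $\AG$ (Proposition~\ref{proposition_fastmix}) gives $\bx^{t+1}-\bx^t=-\tfrac{\eta}{m}\sum_i \vv_i^t/\Norm{\vv_i^t}$, and then the unit-norm property of each summand yields $\Norm{\bx^{t+1}-\bx^t}\le\eta\le 1/(2L_1)$. The paper bounds the squared norm directly via the vector AM--QM inequality rather than the triangle inequality you use, but this is a cosmetic difference only.
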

\begin{proof}
The updates in lines \ref{line:update-V} and \ref{line:update-X} of Algorithm \ref{alg:DNSGD} imply
\begin{align}\label{eq:diff-bar-x}
    \bx^{t+1} =& \bx^t - \eta \bu^t 
    =  \bx^t - \eta \cdot \frac{1}{m}\sum_{i=1}^m \frac{\vv^t_i}{\Norm{\vv^t_i}},
\end{align}
where the first step is based on Proposition \ref{proposition_fastmix}. Therefore, we have
\begin{align}\label{eq:bx-eta}
\begin{split}    
   & \Norm{\bx^{t+1} - \bx^t}^2 
=  \eta^2\Norm{\frac{1}{m}\sum_{i=1}^m \frac{\vv^t_i}{\Norm{\vv^t_i}}}^2  \\
\leq & \frac{\eta^2}{m}\sum_{i=1}^m\Norm{ \frac{\vv^t_i}{\Norm{\vv^t_i}}}^2
=  \eta^2 \leq \frac{1}{4L_1^2},
\end{split}
\end{align}    
where the first inequality is based on the vector AM-QM inequality and the last step is based on the setting of $\eta$.
\end{proof}

We then connect the gradient $\nabla f(\bx^{t})$ to $\nabla f(\bx^{t+1})$ as follows.

\begin{lem}\label{lem:recursion-grad}
Under the setting of Theorem \ref{thm:main}, we have
\begin{align*}
    \Norm{\nabla f(\bx^{t+1})} \leq L_f\eta + \left(1+L_1\eta\right)\Norm{\nabla f(\bx^{t})}.
\end{align*}
\end{lem}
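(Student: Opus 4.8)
The plan is to relate $\nabla f(\bx^{t+1})$ to $\nabla f(\bx^t)$ by applying the relaxed smoothness of the global objective $f$ (established in Proposition~\ref{prop:global-l0l1}) at the two mean iterates. The key preliminary fact is Lemma~\ref{lem:diff-bar-x}, which guarantees $\Norm{\bx^{t+1}-\bx^t}\leq\eta\leq 1/(2L_1)$; in particular $\Norm{\bx^{t+1}-\bx^t}\leq 1/L_1$, so the relaxed-smoothness inequality~(\ref{eq:global-relax-smooth}) is applicable to the pair $(\bx^t,\bx^{t+1})$.

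First I would invoke the triangle inequality to split
\[
\Norm{\nabla f(\bx^{t+1})} \leq \Norm{\nabla f(\bx^{t+1})-\nabla f(\bx^t)} + \Norm{\nabla f(\bx^t)}.
\]
Next I would bound the first term on the right using Proposition~\ref{prop:global-l0l1}, which gives
\[
\Norm{\nabla f(\bx^{t+1})-\nabla f(\bx^t)} \leq \big(L_f + L_1\Norm{\nabla f(\bx^t)}\big)\Norm{\bx^{t+1}-\bx^t}.
\]
Then I would substitute the bound $\Norm{\bx^{t+1}-\bx^t}\leq\eta$ from Lemma~\ref{lem:diff-bar-x} into this estimate, yielding
\[
\Norm{\nabla f(\bx^{t+1})-\nabla f(\bx^t)} \leq \big(L_f + L_1\Norm{\nabla f(\bx^t)}\big)\eta = L_f\eta + L_1\eta\Norm{\nabla f(\bx^t)}.
\]

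Combining these two steps gives
\[
\Norm{\nabla f(\bx^{t+1})} \leq L_f\eta + L_1\eta\Norm{\nabla f(\bx^t)} + \Norm{\nabla f(\bx^t)} = L_f\eta + (1+L_1\eta)\Norm{\nabla f(\bx^t)},
\]
which is exactly the claimed bound. This proof is essentially routine given the earlier results; there is no genuine obstacle. The only subtle point worth being careful about is checking that the step-size bound $\eta\leq 1/(2L_1)$ indeed places $\bx^{t+1}$ within the radius $1/L_1$ of $\bx^t$ where the relaxed-smoothness inequality~(\ref{eq:global-relax-smooth}) is valid—this is precisely what Lemma~\ref{lem:diff-bar-x} secures, so the relaxed-smoothness estimate is legitimately applicable rather than being used outside its domain.
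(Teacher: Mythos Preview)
Your proof is correct and essentially identical to the paper's: both apply the triangle inequality, then Proposition~\ref{prop:global-l0l1} with the step-size bound from Lemma~\ref{lem:diff-bar-x}, combining to obtain the stated inequality. The paper also relies on Lemma~\ref{lem:diff-bar-x} to justify that $\Norm{\bx^{t+1}-\bx^t}\leq 1/(2L_1)\leq 1/L_1$ so that the relaxed-smoothness estimate is applicable, exactly as you note.
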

\begin{proof}    
We have
\begin{align}\label{eq:recursion-grad}
\begin{split}    
   & \Norm{\nabla f(\bx^{t+1})} \\
\leq & \Norm{\nabla f(\bx^{t+1})-\nabla f(\bx^{t})} +   \Norm{\nabla f(\bx^{t})} \\
\leq & \left(L_f + L_1\Norm{\nabla f(\bx^{t})}\right)\Norm{\bx^{t+1}-\bx^{t}} +   \Norm{\nabla f(\bx^{t})} \\
\leq & \left(L_f + L_1\Norm{\nabla f(\bx^{t})}\right)\eta +   \Norm{\nabla f(\bx^{t})} \\
= & L_f\eta + \left(1+L_1\eta\right)\Norm{\nabla f(\bx^{t})},
\end{split}
\end{align}
where the first step is based on triangle inequality; 
the second step is based on Proposition \ref{prop:global-l0l1};
and third last step is based on Lemma~\ref{lem:diff-bar-x}.
\end{proof}

Now we present the proof of Lemma \ref{lem:descent-mean-2}.

\begin{proof}
Applying Lemmas \ref{lem:sym-smooth}, \ref{lem:diff-bar-x} and \ref{lem:recursion-grad}, we have
\begin{align}  
\label{eq:descent-mean-2}\small\begin{split}
   &  f(\bx^{t+1}) - f(\bx^t)  \\
\overset{~(\ref{eq:smooth-descent})~}{\leq} &  \inner{\nabla f(\bx^t)}{\bx^{t+1}-\bx^t} + \frac{L_f+L_1\Norm{\nabla f(\bx^t)}}{2}\Norm{\bx^{t+1}-\bx^t}^2 \\
\overset{(\ref{eq:bx-eta})}{\leq} &  \inner{\nabla f(\bx^t)-\bv^t}{\bx^{t+1}-\bx^t}
+ \inner{\bv^t}{\bx^{t+1}-\bx^t} \\
& + \frac{\eta^2}{2}(L_f+L_1\Norm{\nabla f(\bx^t)}) \\
\overset{~(\ref{eq:diff-bar-x})~}{\leq} &  \Norm{\nabla f(\bx^t)-\bv^t}\Norm{\bx^{t+1}-\bx^t}
- \eta\inner{\bv^t}{\frac{1}{m}\sum_{i=1}^m \frac{\vv_i^t}{\Norm{\vv_i^t}}} \\
& + \frac{\eta^2}{2}(L_f+L_1\Norm{\nabla f(\bx^t)}) \\
\overset{(\ref{eq:bx-eta})}{\leq} &  \eta\Norm{\nabla f(\bx^t)-\bv^t}
+ \eta\inner{\bv^t}{\frac{\bv^t}{\Norm{\bv^t}} - \frac{\bv^t}{\Norm{\bv^t}} - \frac{1}{m}\sum_{i=1}^m \frac{\vv_i^t}{\Norm{\vv_i^t}}} \\
& + \frac{\eta^2}{2}(L_f+L_1\Norm{\nabla f(\bx^t)}) \\
~=~ &  \eta\Norm{\nabla f(\bx^t)-\bv^t} - \eta\Norm{\bv^t} 
+ \eta\inner{\bv^t}{\frac{\bv^t}{\Norm{\bv^t}} -  \frac{1}{m}\sum_{i=1}^m \frac{\vv_i^t}{\Norm{\vv_i^t}}} \\
& + \frac{\eta^2}{2}(L_f+L_1\Norm{\nabla f(\bx^t)}) \\
~\leq~ & \eta\Norm{\nabla f(\bx^t)-\bv^t} - \eta\Norm{\bv^t} 
+ \eta\Norm{\bv^t}\Norm{\frac{\bv^t}{\Norm{\bv^t}} -  \frac{1}{m}\sum_{i=1}^m \frac{\vv_i^t}{\Norm{\vv_i^t}}} \\
& + \frac{\eta^2L_f}{2} + \frac{\eta^2L_1}{2}\Norm{\nabla f(\bx^t)} \\
~\leq~ & \eta\Norm{\nabla f(\bx^t)-\bv^t} + \eta(\Norm{\nabla f(\bx^t)-\bv^t} - \Norm{\nabla f(\bx^t)}) \\
& + \eta\Norm{\bv^t}\Norm{\frac{\bv^t}{\Norm{\bv^t}} - \frac{1}{m}\sum_{i=1}^m \frac{\vv_i^t}{\Norm{\vv_i^t}}}
+ \frac{\eta^2L_f}{2} + \frac{\eta^2L_1}{2}\Norm{\nabla f(\bx^t)} \\
\overset{~(\ref{eq:consensus-error-1})~}{\leq} & 2\eta\Norm{\nabla f(\bx^t)-\bv^t} - \eta\left(1 - \frac{\eta L_1}{2}\right) \Norm{\nabla f(\bx^t)} \\
& + \frac{\eta}{m}\sum_{i=1}^m \Norm{\bv^t - \vv^t_i} + \frac{\eta^2L_f}{2} \\
~\leq~ & 2\eta\Norm{\nabla f(\bx^t)-\bv^t} - \eta\left(1 - \frac{\eta L_1}{2}\right)\Norm{\nabla f(\bx^t)} \\
& + \frac{\eta}{\sqrt{m}}\Norm{\mV^t-\vone\bv^t}
+ \frac{\eta^2L_f}{2}, 
\end{split}
\end{align}
where the first inequality is based on Lemma \ref{lem:sym-smooth} and the fact $\Norm{\bx^{t+1} - \bx^t} \leq 1/(2L_1)$ from Lemma~\ref{eq:diff-bar-x};
the forth to last line is based on Cauchy--Schwarz inequality; 
the third to last line is based on the triangle inequality;
the last line is based on the vector AM-QM inequality.

We bound the term $\Norm{\nabla f(\bx^t)-\bv^t}$ in equation (\ref{eq:descent-mean-2}) as
\begin{align}\label{eq:diff-grad-v0}
\small\begin{split}    
 & \Norm{\nabla f(\bx^t)-\bv^t}^2 
= \Norm{\frac{1}{m}\sum_{i=1}^m (\nabla f_i(\bx^t) - \vg_i^t))}^2 \\
\leq & 2\underbrace{\Norm{\frac{1}{m}\sum_{i=1}^m (\nabla f_i(\bx^t) - \nabla f_i(\vx_i^t))}^2}_{B_1} 
+ 2\underbrace{\Norm{\frac{1}{m}\sum_{i=1}^m (\nabla f_i(\vx_i^t)-\vg_i^t)}^2}_{B_2}. 
\end{split}
\end{align}

We bound term $B_1$ as
\begin{align}\label{eq:diff-grad-v1}
\begin{split}     
B_1= & \Norm{\frac{1}{m}\sum_{i=1}^m (\nabla f_i(\bx^t) - \nabla f_i(\vx_i^t))}^2 \\
\leq & \frac{1}{m}\sum_{i=1}^m\Norm{\nabla f_i(\bx^t) - \nabla f_i(\vx_i^t)}^2 \\
\leq &  \frac{1}{m}\sum_{i=1}^m 2\left(L_0^2 + L_1^2\Norm{\nabla f_i(\bx^t)}^2\right) \Norm{\bx^t-\vx_i^t}^2 \\
\leq &  \frac{1}{m}\sum_{i=1}^m 2\left(L_0^2 + L_1^2\left(\Norm{\nabla f(\bx^t)}+\zeta\right)^2\right) \Norm{\bx^t-\vx_i^t}^2 \\
\leq &  \frac{1}{m}\left(M_0 + M_1\Norm{\nabla f(\bx^t)}\right)^2\Norm{\mX^t-\vone\bx^t}^2,
\end{split}
\end{align}
where we denote $M_0=\sqrt{2(L_0^2 +L_1^2\zeta^2)}$ and $M_1=\sqrt{2}L_1$. 
Here, the first inequality is based on vector AM-QM inequality; 
the second inequality is based on Young's inequality, Assumption, \ref{asm:sym-1} {and the fact $\Norm{\bx^t-\vx_i^t}\leq1/L_1$ from equation (\ref{eq:bound-xti-barx})};
the third inequality is based on Assumption \ref{asm:heterogeneous}. 

For given $\{\vx_i^t\}_{i=1}^m$, we bound the expectation of term $B_2$ as
\begin{align}\label{eq:diff-grad-v2}
\begin{split}    
\BE[B_2] = & \BE\Norm{\frac{1}{m}\sum_{i=1}^m (\vg_i^t - \nabla f_i(\vx_i^t))}^2  \\
= & \frac{1}{m^2}\sum_{i=1}^m {\mathbb E}\left\|\vg_i^t - \nabla f_i(\vx_i^t)\right\|^2  \\
& + \frac{1}{m^2}\sum_{i=1}^m\sum_{j\neq i} {\mathbb E}\left[\left\langle \vg_i^t - \nabla f_i(\vx_i^t), \vg_j^t - \nabla f_j(\vx_j^t)\right\rangle\right] \\
= & \frac{1}{m^2}\sum_{i=1}^m \BE\Norm{\vg_i^t - \nabla f_i(\vx_i^t)}^2  \\
\leq & \frac{1}{m^2}\sum_{i=1}^m \frac{\sigma^2}{b}  = \frac{\sigma^2}{mb},
\end{split}
\end{align}
where the second step is based on the fact that $\vg_i^t$ and $\vg_j^t$ are independent for $i\neq j$ which implies ${\mathbb E}[\langle \vg_i^t - \nabla f_i(\vx_i^t), \vg_j^t - \nabla f_i(\vx_j^t)\rangle]=0$;
the inequality is based on the setting $\vg_i^t=\frac{1}{b}\sum_{k=1}^b \nabla F_i(\vx_i^{{t}};\vxi_{i,k}^{{t}})$ in line \ref{line:Gt} of Algorithm \ref{alg:DNSGD} and Assumption \ref{asm:sfo}.

Combining results (\ref{eq:diff-grad-v0}), (\ref{eq:diff-grad-v1}) and (\ref{eq:diff-grad-v2}), we have
\begin{align}\label{eq:diff-grad-v}
\begin{split}    
  & \BE\Norm{\nabla f(\bx^t)-\bv^t} \\
\leq & \sqrt{\frac{2}{m}\left(M_0 + M_1\Norm{\nabla f(\bx^t)}\right)^2\Norm{\mX^t-\vone\bx^t}^2 + \frac{\sigma^2}{mb}} \\
\leq & \sqrt{\frac{2}{m}}\left(M_0 + M_1\Norm{\nabla f(\bx^t)}\right)\Norm{\mX^t-\vone\bx^t} + \frac{\sigma}{\sqrt{mb}},
\end{split}
\end{align}
for given $\mX^t$, where the first step is based on the Jensen's inequality 
and the second step is based on the elementary inequality $\sqrt{a^2+b^2}\leq a+b$ for $a,b>0$.

Combining equations (\ref{eq:descent-mean-2}) and (\ref{eq:diff-grad-v}), we finish the proof.
\end{proof}

\section{The Proof of Lemma \ref{lem:recursion-X}}
\begin{proof}    
Recall that line \ref{line:U} of Algorithm \ref{alg:DNSGD} set
\begin{align*}
\mU^t = \begin{bmatrix}
        \dfrac{\vv^t_1}{\Norm{\vv^t_1}} & \dots & \dfrac{\vv^t_1}{\Norm{\vv^t_m}}
\end{bmatrix}^\top,    
\end{align*}
which implies $\Norm{\mU^t}^2=m$.
Based on equation (\ref{eq:diff-bar-x}), we have
\begin{align}\label{eq:recursion-x}
\begin{split}    
  &  \Norm{\mX^{t+1}-\vone\bx^{t+1}} \\
= &  \Norm{\AG(\mX^t - \eta \mU^t, K) - \frac{1}{m}\vone\vone^\top\AG(\mX^t - \eta \mU^t, K)} \\
\leq & \rho\Norm{\mX^t - \eta \mU^t- \frac{1}{m}\vone\vone^\top(\mX^t - \eta \mU^t)} \\
= & \rho\Norm{\mX^t - \eta \mU^t- \vone(\bx^t - \eta \bu^t)} \\
\leq & \rho\left(\Norm{\mX^t - \vone\bx^t} + \eta\Norm{\mU^t - \vone\bu^t}\right) \\
\leq & \rho\left(\Norm{\mX^t - \vone\bx^t} + m\eta\right),
\end{split}
\end{align}
where the first inequality is based on Proposition \ref{proposition_fastmix}; 
the second inequality is based on the triangle inequality;
the last step applies Lemma \ref{lem:vector-mean} which leads to
\begin{align*}
    \Norm{\mU^t - \vone\bu^t}^2 
\leq \Norm{\mU^t}^2 = m.
\end{align*}
Based on equation (\ref{eq:recursion-x}), we have
\begin{align*}
    \Norm{\mX^t-\vone\bx^t} 
\leq & \rho^t\left(\Norm{\mX^0 - \vone\bx^0} - \frac{\rho m\eta}{1-\rho}\right) + \frac{\rho m\eta}{1-\rho} \leq \frac{\rho m\eta}{1-\rho}, 
\end{align*}
where the second inequality is based on the initialization such that $\vx_i^0=\bar\vx^0$ for each $i$.

\end{proof}

\section{The Proof of Lemma \ref{lem:recursion-V}}
\begin{proof}    
We denote
\begin{align*}
    \nabla \vf(\mX^{t})
= \begin{bmatrix}
    \nabla f_1(\vx_1^{t})^\top \\ \vdots \\
    \nabla f_m(\vx_m^{t})^\top \\
\end{bmatrix},~
\nabla \vf(\mX^{t+1})
= \begin{bmatrix}
    \nabla f_1(\vx_1^{t+1})^\top \\ \vdots \\
    \nabla f_m(\vx_m^{t+1})^\top \\
\end{bmatrix}\in\BR^{m\times d}.
\end{align*}
The update in line \ref{line:Vt1} of Algorithm \ref{alg:DNSGD} implies
\begin{align}\label{eq:recursion-v}
\begin{split} 
   & \Norm{\mV^{t+1} - \vone\bv^{t+1}} \\
= & \Bigg\|\AG(\mV^t + \mG^{t+1} - \mG^t),K) \\
&~~~~- \frac{1}{m}\vone\vone^\top\AG(\mV^t + \mG^{t+1} - \mG^t),K)\Bigg\| \\
\leq & \rho\Norm{\mV^t + \mG^{t+1} - \mG^t - \frac{1}{m}\vone\vone^\top(\mV^t + \mG^{t+1} -\mG^{t})} \\
\leq & \rho\Norm{\mV^t - \vone\bv^t} + \rho\Norm{\mG^{t+1} - \mG^t - \frac{1}{m}\vone\vone^\top(\mG^{t+1} - \mG^{t})} \\
\leq & \rho\Norm{\mV^t - \vone\bv^t} + \rho\Norm{\mG^{t+1} - \mG^{t}} \\
\leq & \rho\Norm{\mV^t - \vone\bv^t}  + \rho\underbrace{\Norm{\nabla \vf(\mX^{t+1}) - \nabla \vf(\mX^t)}}_{C_1} \\ 
& + \rho\underbrace{\Norm{\mG^{t+1} - \nabla \vf(\mX^{t+1})}}_{C_2} + \rho\underbrace{\Norm{\mG^{t} - \nabla \vf(\mX^t)}}_{C_3}, 
\end{split}
\end{align}
where the first three step follows the derivation of equation (\ref{eq:recursion-x});
the fourth step is based on Lemma~\ref{lem:vector-mean};
the last step is based on triangle inequality.

We first consider term $C_1$.
The setting of $K$ in this lemma implies 
\begin{align*}
\rho \leq \frac{1}{1+m \eta L_1},
\end{align*}
which leads to
\begin{align}\label{eq:bound-xti-barx}
\Norm{\vx_t^i-\bx^t} \leq \Norm{\mX^t-\vone\bx^t} \leq\frac{\rho m\eta}{1-\rho} \leq \frac{1}{L_1}. 
\end{align}
Therefore, we have
\begin{align*}
& \Norm{\nabla f(\vx^{t+1}_i) - \nabla f(\vx^t_i)} \\
\leq &  \Norm{\nabla f(\vx^{t+1}_i) - \nabla f(\bx^{t+1})} \\
& + \Norm{\nabla f(\bx^{t+1}) - \nabla f(\bx^t)} + \Norm{\nabla f(\bx^t_i) - \nabla f(\vx^t_i)} \\
\leq & \left(L_f+L_1\Norm{\nabla f(\bx^{t+1})}\right)\Norm{\vx^{t+1}_i - \bx^{t+1}} \\
& + \left(L_f+L_1\Norm{\nabla f(\bx^t)}\right)\Norm{\bx^{t+1} - \bx^t}  
  + \left(L_f+L_1\Norm{\nabla f(\bx^t)}\right)\Norm{\vx^{t}_i - \bx^{t}},
\end{align*}
where the first step is based on the triangle inequality and the last step is based on Proposition \ref{prop:global-l0l1} and equation (\ref{eq:bound-xti-barx}).
Summing over above result with $i=1,\dots,m$, we have
\begin{align}\label{eq:diff-grad}
 \begin{split}    
 C_1=&\sum_{i=1}^m\Norm{\nabla f(\vx^{t+1}_i) - \nabla f(\vx^t_i)}^2 \\
\leq & 3\left(L_f+L_1\Norm{\nabla f(\bx^{t+1})}\right)^2\sum_{i=1}^m\Norm{\vx^{t+1}_i - \bx^{t+1}}^2 \\
 & + 3\left(L_f+L_1\Norm{\nabla f(\bx^t)}\right)^2\sum_{i=1}^m\Norm{\bx^{t+1} - \bx^t}^2 \\
 & + 3\left(L_f+L_1\Norm{\nabla f(\bx^t)}\right)^2\sum_{i=1}^m\Norm{\vx^{t}_i - \bx^{t}}^2 \\
\leq & 3  \left(L_f+L_1\Norm{\nabla f(\bx^{t+1})}\right)^2\Norm{\mX^{t+1} - \vone\bx^{t+1}}^2 \\
 & + 3m\left(L_f+L_1\Norm{\nabla f(\bx^t)}\right)^2\eta^2 \\ 
 & + 3\left(L_f+L_1\Norm{\nabla f(\bx^t)}\right)^2\Norm{\mX^{t} - \vone\bx^{t}}^2,
\end{split}
\end{align}
where the last step is based on Lemma \ref{lem:diff-bar-x}.

Substituting equations (\ref{eq:recursion-x}) and (\ref{eq:recursion-grad}) into equation (\ref{eq:diff-grad}), we have
\begin{align}\label{eq:diff-grad-2}\small
\begin{split}    
& \Norm{\nabla \vf(\mX^{t+1}) - \nabla \vf(\mX^t)}^2 \\
\leq & 3 \rho^2 \left(L_f+L_1\left(L_f\eta + (L_1\eta+1)\Norm{\nabla f(\bx^{t})}\right)\right)^2\left(\Norm{\mX^t - \vone\bx^t} + m\eta\right)^2 \\
 &  + 3\left(L_f+L_1\Norm{\nabla f(\bx^t)}\right)^2\Norm{\mX^{t} - \vone\bx^{t}}^2 \\
 & + 3m\left(L_f+L_1\Norm{\nabla f(\bx^t)}\right)^2\eta^2 \\
\leq & \left(M_2 + M_3\Norm{\nabla f(\bx^{t})}\right)^2 \left(\Norm{\mX^t - \vone\bx^t} + m\eta\right)^2,
\end{split}
\end{align}
where $M_2 = {3}(\rho+1)L_f+{3}\rho L_fL_1\eta$ and 
$M_3 = {3}\rho L_1(L_1\eta+1)+{3}L_1$.

For the term $C_2$, we have
\begin{align}\label{eq:sub-G}
\begin{split}    
  & \BE\left[\Norm{\mG^{t+1} - \nabla \vf(\mX^{t+1})}^2\right]  \\
= & \BE\left[\sum_{i=1}^m \Norm{\vg_i^{t+1} - \nabla f_i(\vx_i^{t+1})}^2\right] \\
= & \BE\left[\sum_{i=1}^m \Norm{
 \frac{1}{b}\sum_{k=1}^b \nabla F_i(\vx_i^{t+1};\vxi_{i,k}^{t+1}) - \nabla f_i(\vx_i^{t+1})}^2\right]    \\
\leq & \frac{m\sigma^2}{b},
\end{split}
\end{align}
for given $\{\vx_i^{t+1}\}_{i=1}^m$, 
where the inequality is based on Assumption \ref{asm:sfo}.

Consequently, we have
\begin{align}\label{eq:gtp1}
\begin{split}    
 C_2 = & \BE\left[\Norm{\mG^{t+1} - \nabla \vf(\mX^{t+1})}\right]  
=  \sqrt{(\BE\Norm{\mG^{t+1} - \nabla \vf(\mX^{t+1})})^2} \\
\leq & \sqrt{\BE\left[\Norm{\mG^{t+1} - \nabla \vf(\mX^{t+1})}^2\right]}
\overset{(\ref{eq:sub-G})}{\leq}  \frac{\sqrt{m}\sigma}{\sqrt{b}},
\end{split}
\end{align}
where the first inequality is based on Jensen's inequality.

Similarly, the term $C_3$ holds
\begin{align}\label{eq:gt}
  C_3 =  \BE\left[\Norm{\mG^{t} - \nabla \vf(\mX^{t})}\right] 
\leq \frac{\sqrt{m}\sigma}{\sqrt{b}}.
\end{align}

Combining the results of (\ref{eq:recursion-v}), (\ref{eq:diff-grad-2}), (\ref{eq:gtp1}), and (\ref{eq:gt}), we have
\begin{align*}\small
\begin{split} 
   & \BE\left[\Norm{\mV^{t+1} - \vone\bv^{t+1}}\right] \\
\leq & \rho\Norm{\mV^t - \vone\bv^t} + \rho\left(M_2 + M_3\Norm{\nabla f(\bx^{t})}\right) \left(\Norm{\mX^t - \vone\bx^t} + m\eta\right) + \frac{2\rho\sqrt{m}\sigma}{\sqrt{b}} \\
\leq & \rho\Norm{\mV^t - \vone\bv^t} + \rho\left(M_2 + M_3\Norm{\nabla f(\bx^{t})}\right)\Norm{\mX^t - \vone\bx^t} \\
& + \rho\left(M_2 + M_3\Norm{\nabla f(\bx^{t})}\right) m\eta + \frac{2\rho\sqrt{m}\sigma}{\sqrt{b}},
\end{split}
\end{align*}
for given $\{\vx_i^{t+1}\}_{i=1}^m$ and $\{\vv_i^{t+1}\}_{i=1}^m$, which finishes the proof.
\end{proof}

\section{The proof of Theorem \ref{thm:main}}

For given $\mX^t$ and $\mV^t$, 
we apply Lemmas \ref{lem:descent-mean-2}, \ref{lem:recursion-X}, \ref{lem:recursion-V}, and \ref{lem:recursion-grad} to achieve
\begin{align*}    
& \BE[\Phi^{t+1}] \\
= & \BE\left[f(\bx^{t+1}) + \frac{3\eta}{\sqrt{m}}\left(M_0 + M_1\Norm{\nabla f(\bx^{t+1})}\right)\Norm{\mX^{t+1}-\vone\bx^{t+1}}\right] \\
& \quad+ \frac{2\eta}{\sqrt{m}}\BE\left[\Norm{\mV^{t+1}-\vone\bv^{t+1}}\right] \\
\leq & f(\bx^t) - \eta\left(1 - \frac{\eta L_1}{2}\right)\Norm{\nabla f(\bx^t)} \\
&  + \frac{2\sqrt{2}\eta}{\sqrt{m}}\left(M_0 + M_1\Norm{\nabla f(\bx^t)}\right)\Norm{\mX^t-\vone\bx^t} \\
& + \frac{\eta}{\sqrt{m}}\Norm{\mV^t - \vone\bv^t} + \frac{2\eta\sigma}{\sqrt{mb}}
+ \frac{\eta^2L_f}{2} \\
&  + \frac{3\eta\rho}{\sqrt{m}}\left(M_0 + M_1\left(L_f\eta + \left(1+L_1\eta\right)\Norm{\nabla f(\bx^{t})}\right)\right)\left(\Norm{\mX^t - \vone\bx^t} + m\eta\right) \\
 &  + \frac{2\eta\rho}{\sqrt{m}}\Bigg(\Norm{\mV^t - \vone\bv^t} + \left(M_2 + M_3\Norm{\nabla f(\bx^{t})}\right)\Norm{\mX^t - \vone\bx^t} \\
 & \qquad\quad\quad + \left(M_2 + M_3\Norm{\nabla f(\bx^{t})}\right) m\eta + \frac{2\sqrt{m}\sigma}{\sqrt{b}}\Bigg) \\
 = & f(\bx^t)  + \left(\frac{2\sqrt{2}M_0\eta}{\sqrt{m}} + \rho\Big(\frac{3\eta(M_0 + M_1L_f\eta)}{\sqrt{m}} + \frac{2\eta M_2}{\sqrt{m}}\Big)\right)\Norm{\mX^t-\vone\bx^t} \\
 & + \left(\frac{2\sqrt{2}M_1\eta}{\sqrt{m}} + \rho\left(\frac{3\eta M_1(1 + L_1\eta)}{\sqrt{m}} + \frac{2\eta M_3}{\sqrt{m}}\right)\right)\Norm{\nabla f(\bx^t)}\Norm{\mX^t-\vone\bx^t} \\
 & + \left(\frac{\eta}{\sqrt{m}}+\frac{2\rho\eta}{\sqrt{m}}\right)\Norm{\mV^t-\vone\bv^t} \\
 & - \eta\left(1 - \frac{\eta L_1}{2} - \rho\Big(3\sqrt{m}\eta M_1(1 + L_1\eta) + 2\sqrt{m}\eta M_3\Big)\right)\Norm{\nabla f(\bx^t)} \\
 & + \frac{2\eta\sigma}{\sqrt{mb}}  + \frac{\eta^2L_f}{2}  + \rho\Big(3\sqrt{m}\eta^2(M_0 + M_1L_f\eta) + 2\sqrt{m}\eta^2M_2{+\frac{4\eta\sigma}{\sqrt{b}}}\Big)  \\
\leq & f(\bx^t)  + \frac{3\eta}{\sqrt{m}}\left(M_0 + M_1\Norm{\nabla f(\bx^t)}\right)\Norm{\mX^t-\vone\bx^t} + \frac{2\eta}{\sqrt{m}}\Norm{\mV^t-\vone\bv^t}   \\
& - \eta\left(\frac{7}{8} - \frac{\eta L_1}{2}\right)\Norm{\nabla f(\bx^t)} + \frac{3\eta^2L_f}{4} \\
= & \Phi^t - \eta\left(\frac{7}{8} - \frac{\eta L_1}{2}\right)\Norm{\nabla f(\bx^t)} + \frac{3\eta^2L_f}{4} \\
\leq & \Phi^t - \frac{5\eta}{8}\Norm{\nabla f(\bx^t)} + \frac{3\eta^2L_f}{4},
\end{align*}
where the second inequality requires the condition
\begin{align*}
\begin{cases}    
 2\sqrt{2}M_0 + \rho(3(M_0 + M_1L_f\eta) + 2 M_2) \leq 3M_0,\\[0.1cm]
 2\sqrt{2}M_1 + \rho\left(3 M_1(1 + L_1\eta) + 2 M_3\right) \leq 3M_1, \\[0.1cm]
 \eta + 2\rho\eta \leq 2\eta,  \\[0.15cm]
 \rho\Big(3\sqrt{m}\eta M_1(1 + L_1\eta) + 2\sqrt{m}\eta M_3\Big) \leq \dfrac{1}{8}, \\[0.25cm]
 \dfrac{2\sigma}{\sqrt{mb}}  + \rho\Big(3\sqrt{m}\eta(M_0 + M_1L_f\eta) + 2\sqrt{m}\eta M_2 {+\dfrac{4\eta\sigma}{\sqrt{b}}}\Big) \leq \dfrac{\eta L_f}{4}
\end{cases}
\end{align*}
that holds by taking $b\geq\left\lceil {256\sigma^2}/(m\eta^2L_f^2)\right\rceil$
and 
\begin{align*}    
\rho \leq \min\Bigg\{&\frac{1}{1+m \eta L_1},
  \frac{(3-2\sqrt{2})M_0}{3(M_0 + M_1L_f\eta) + 2 M_2}, \frac{(3-2\sqrt{2})M_1}{3 M_1(1 + L_1\eta) + 2 M_3},\\
  &  \frac{1}{2}, \dfrac{1}{8(3\sqrt{m}\eta M_1(1 + L_1\eta) + 2\sqrt{m}\eta M_3)}, \\
 & \dfrac{\eta L_f}{8(3\sqrt{m}\eta(M_0 + M_1L_f\eta) + 2\sqrt{m}\eta M_2{+{4\eta\sigma}/{\sqrt{b}}\,})} \Bigg\}.
\end{align*}
Recall that we have set
$L_1\eta \leq 1/2$, $M_0=\Theta(L_f)$, $M_1=\Theta(L_1)$, $M_2\leq \Theta(L_f)$, and $M_3\leq \Theta(L_1)$,
which means we require 
\begin{align*}    
\rho \leq \fO\left(\frac{1}{{\sqrt{m}(1+\eta)}}\right).
\end{align*}
This can be achieved by taking  
\begin{align*}
K=\fO\left(\frac{\log{{(\sqrt{m}(1+\eta))}}}{\sqrt{\gamma}}\right).    
\end{align*}

Following the setting $\eta = \min\left\{\epsilon/(4L_f+1), 1/(2L_1)\right\}$, we have
\begin{align*}    
\BE\left[\Norm{\nabla f(\bx^t)}\right]
\leq \frac{\BE[8(\Phi^t - \Phi^{t+1})]}{5\eta}  + \frac{6\eta L_f}{5}.
\end{align*}

Taking the average on above inequalities with $t=0,\dots,T-1$, we have
\begin{align*}
& \BE\left[\frac{1}{T}\sum_{t=0}^{T-1}\Norm{\nabla f(\bx^t)}\right] 
\leq \BE\left[\frac{8(\Phi^0 - \Phi^{T})}{5\eta T}  + \frac{6\eta L_f }{5}\right]
\leq  \frac{8\Delta_{\Phi}}{5\eta T}  + \frac{6\eta L_f}{5},
\end{align*}
where the last step holds due to
\begin{align*}\small
\begin{split}    
 & \BE[\Phi^0 - \Phi^T] \\
= & \BE\Bigg[f(\bx^0) + \frac{3\eta}{\sqrt{m}}\left(M_0 + M_1\Norm{\nabla f(\bx^0)}\right)\Norm{\mX^0-\vone\bx^0} + \frac{2\eta}{\sqrt{m}}\Norm{\mV^0-\vone\bv^0} \\
& - \left(f(\bx^T) + \frac{3\eta}{\sqrt{m}}\left(M_0 + M_1\Norm{\nabla f(\bx^T)}\right)\Norm{\mX^T-\vone\bx^T} + \frac{2\eta}{\sqrt{m}}\Norm{\mV^T-\vone\bv^T}\right)\Bigg] \\
\leq & f(\bx^0) - \inf_{\vx\in\BR^d} f(\vx)  + \frac{2\eta}{\sqrt{m}}\BE\left[\Norm{\mV^0-\vone\bv^0}\right] = \Delta_{\Phi}.
\end{split}
\end{align*}
We consider the following case for the choice of $\eta$:
\begin{enumerate}
    \item In the case of $\epsilon/(4L_f+1)\leq 1/(2L_1)$, we follow the setting 
    \begin{align}\label{eq:setting-1}
    \begin{split}        
        & \eta=\frac{\epsilon}{4L_f+1}, \quad
        b \geq \left\lceil \frac{256(4L_f+1)^2\sigma^2}{mL_f^2\epsilon^2}\right\rceil, \\
         & \text{and} \quad
        T \geq \frac{8\Delta_{\Phi}}{\eta\epsilon} = \frac{8(4L_f+1)\Delta_{\Phi}}{\epsilon^2}
    \end{split}
    \end{align}
    to obtain 
    \begin{align*}
    & \BE\left[\frac{1}{T}\sum_{t=0}^{T-1}\Norm{\nabla f(\bx^t)}\right]  
    \leq  \frac{\epsilon}{5}  + \frac{3\epsilon}{10} = \frac{\epsilon}{2}.
    \end{align*}
    \item In the case of $1/(2L_1)\leq\epsilon/(4L_f+1)$, we follow the setting
    \begin{align}\label{eq:setting-2}
        \eta=\frac{1}{2L_1},\quad
        b \geq \left\lceil \frac{1024L_1^2\sigma^2}{m L_f^2}\right\rceil,\quad\text{and}\quad
        T \geq \frac{16L_1\Delta_{\Phi}}{\epsilon}.
    \end{align}
    to obtain
    \begin{align*}
    & \BE\left[\frac{1}{T}\sum_{t=0}^{T-1}\Norm{\nabla f(\bx^t)}\right]  
    \leq \frac{\epsilon}{5} + \frac{3\epsilon}{10} = \frac{\epsilon}{2}.
    \end{align*}
\end{enumerate}
Hence, we finish the proof.

\section{The Proof of Corollary \ref{cor:main}}
\begin{proof}
We first provide the upper bound for $\BE\left[\Norm{\mV^0-\vone\bv^0}\right]$.
We let $\bg^0=\frac{1}{m}\vone^\top\mG^0$, then we have
\begin{align*}
   & \BE\left[\Norm{\mV^0-\vone\bv^0}^2\right] 
\leq  \BE\left[\hat\rho^2\Norm{\mG^0-\vone\bg^0}^2\right]  \\
\leq & \BE\left[\hat\rho^2\Norm{\mG^0}^2\right]  
=   \hat\rho^2\sum_{i=1}^m \BE\Norm{\frac{1}{b}\sum_{k=1}^b \nabla F_i(\vx_i^0;\vxi_{i,k}^0)}^2 \\
=  &  2\hat\rho^2\sum_{i=1}^m \left(\BE\left[\Norm{\frac{1}{b}\sum_{k=1}^b \nabla F_i(\vx_i^0;\vxi_{i,k}^0) - \nabla f_i(\vx_i^0)}^2 + \Norm{\nabla f_i(\vx_i^0)}^2\right]\right) \\
=  &  2\hat\rho^2\sum_{i=1}^m \left(\frac{\sigma^2}{b} + \Norm{\nabla f_i(\vx_i^0)}^2\right) \\
=  &  2\hat\rho^2 \left(\frac{m\sigma^2}{b} + \sum_{i=1}^m\Norm{\nabla f_i(\vx_i^0)}^2\right). 
\end{align*}
Hence, we require
\begin{align*}
    \hat\rho    \leq  \frac{\sqrt{m}\Delta_f}{2\sqrt{2}\eta\sqrt{{m\sigma^2}/{b} + \sum_{i=1}^m\Norm{\nabla f_i(\vx_i^0)}^2}}
\end{align*}
to guarantee
\begin{align}\label{eq:Delta_f}
\BE\left[\Norm{\mV^0-\vone\bv^0}\right]
\leq \sqrt{\BE\left[\Norm{\mV^0-\vone\bv^0}^2\right]}
\leq \frac{\sqrt{m}\Delta_f}{2\eta},
\end{align}
where the first step is based on Jensen's inequality.
Consequently, we have
\begin{align*}
\Delta_{\Phi} = f(\bx^0) - \inf_{\vx\in\BR^d} f(\vx) + \frac{2\eta}{\sqrt{m}}\BE\left[\Norm{\mV^0-\vone\bv^0}\right] \leq  2\Delta_f
\end{align*}
and the requirement of above $\hat\rho$ can be satasfied by taking
\begin{align*}
\hat K \geq \left\lceil\frac{1}{\sqrt{\gamma}}\log\frac{2\sqrt{2}\eta\sqrt{{m\sigma^2}/{b} + \sum_{i=1}^m\Norm{\nabla f_i(\vx_i^0)}^2}}{\sqrt{m}\Delta_f}\right\rceil
= \tilde\fO\left(\frac{1}{\sqrt{\gamma}}\right).
\end{align*}
Therefore, the term $\Delta_{\Phi}$ in the setting of $T$ presented in Theorem \ref{thm:main} can be replaced by $2\Delta_f$.

We then consider how to making the gradient small on each node.
Specifically, we have
\begin{align*}
& \BE\left[\Norm{\nabla f(\hat\vx_i)}\right] \\
= & \BE\left[\frac{1}{T}\sum_{t=0}^{T-1}\Norm{\nabla f(\vx_i^t)}\right] \\
\leq & \BE\left[\frac{1}{T}\sum_{t=0}^{T-1}\Norm{\nabla f(\vx_i^t)-\nabla f(\bx^t)} + \frac{1}{T}\sum_{t=0}^{T-1}\Norm{\nabla f(\bx^t)}\right] \\
\leq & \BE\left[\frac{1}{T}\sum_{t=0}^{T-1}\big(L_f+L_1\Norm{\nabla f(\bx^t)}\big)\Norm{\vx_i^t - \bx^t} + \frac{1}{T}\sum_{t=0}^{T-1}\Norm{\nabla f(\bx^t)}\right] \\
\leq & \BE\left[\frac{1}{T}\sum_{t=0}^{T-1}\big(L_f+L_1\Norm{\nabla f(\bx^t)}\big)\Norm{\mX^t - \vone\bx^t
}\right] + \frac{\epsilon}{2} \\
\leq & \BE\left[\frac{1}{T}\sum_{t=0}^{T-1}\frac{\rho m\eta\big(L_f+L_1\Norm{\nabla f(\bx^t)}\big)}{1-\rho}\right] + \frac{\epsilon}{2} \\
= & \frac{\rho m\eta\left(L_f+\BE\left[\frac{L_1}{T}\sum_{t=0}^{T-1}\Norm{\nabla f(\bx^t)}\right]\right)}{1-\rho}+ \frac{\epsilon}{2} \\
\leq & \frac{\rho m\eta\big(L_f+L_1\epsilon/2
\big)}{1-\rho}+ \frac{\epsilon}{2} 
\leq \epsilon,
\end{align*}
where the first step is based on the triangle inequality;
the second step is based on Proposition \ref{prop:global-l0l1} and equation (\ref{eq:bound-xti-barx});
the third step is based on Theorem \ref{thm:main};
the fourth step is based on Lemma \ref{lem:recursion-X};
the last line requires
\begin{align*}
    \rho\leq\frac{\epsilon}{2m\eta L_f + (m\eta L_1+1)\epsilon},
\end{align*}
which can be satisfies by taking $K=\tilde\fO(\sqrt{1/\gamma})$. 

In summary, the communication complexity is
\begin{align*}
   \hat K + TK = \tilde\fO\left(\left(\frac{L_f}{\epsilon^2} + \frac{L_1}{\epsilon}\right)\frac{\Delta_f}{\sqrt{\gamma}}\right).
\end{align*}
Following equations (\ref{eq:setting-1}), (\ref{eq:setting-2}), and (\ref{eq:Delta_f}), the sample complexity on each node is
\begin{align*}
Tb = \fO\left(\frac{1}{m}\left(\frac{L_f\sigma^2\Delta}{\epsilon^4} + \frac{\sigma^2}{\epsilon^2} + \frac{L_1^3\sigma^2\Delta}{L_f^2 \epsilon} + \frac{L_1^2\sigma^2 }{L_f^2} \right)\right), 
\end{align*}    
where the terms $\sigma^2/\epsilon^2$ and $L_1^2\sigma^2/L_f^2$ come from the initialization on $\mG^0$ in line \ref{line:G0} of Algorithm~\ref{alg:DNSGD}.
\end{proof}

\end{document}